\begin{document}

\begin{frontmatter}

\author[vki]{Alessandro Munaf\`{o}\fnref{me}\corref{cor1}}
\address[vki]{von Karman Institute for Fluid Dynamics, 1640 Rhode-Saint-Gen\`{e}se, Belgium}
\fntext[me]{PhD candidate, Aeronautics and Aerospace Department, von Karman Institute for Fluid Dynamics, Chauss\'{e}e de Waterloo 72, 1640 Rhode-Saint-Gen\`{e}se, Belgium, \texttt{munafo@vki.ac.be}}
\cortext[cor1]{Corresponding author}
\ead{munafo@vki.ac.be}
\author[texas]{Jeffrey R. Haack\fnref{jeff}}
\address[texas]{The University of Texas at Austin, Austin, TX 78712, USA}
\fntext[jeff]{Postdoctoral Fellow, Department of Mathematics, The University of Texas at Austin, 201 E. 24th Street, Austin, TX 78712, USA, \texttt{haack@math.utexas.edu}}
\author[texas]{Irene M. Gamba\fnref{irene}}
\fntext[irene]{Professor, Department of Mathematics \& The Institute for Computational Engineering and Sciences (ICES), The University of Texas at Austin, 201 E. 24th Street, Austin, TX 78712, USA, \texttt{gamba@math.utexas.edu}}
\author[vki]{Thierry E. Magin\fnref{thierry}}
\fntext[thierry]{Associate Professor, Aeronautics and Aerospace Department, von Karman Institute for Fluid Dynamics, Chauss\'{e}e de Waterloo 72, 1640 Rhode-Saint-Gen\`{e}se, Belgium, \texttt{magin@vki.ac.be}}

\title{A Spectral-Lagrangian Boltzmann Solver for a Multi-Energy Level Gas}

\begin{abstract}
In this paper a spectral-Lagrangian method for the Boltzmann equation for a multi-energy level gas is proposed. Internal energy levels are treated as separate species and inelastic collisions (leading to internal energy excitation and relaxation) are accounted for. The formulation developed can also be used for the case of a mixture of monatomic gases without internal energy (where only elastic collisions occur). The advantage of the spectral-Lagrangian method lies in the generality of the algorithm in use for the evaluation of the elastic and inelastic collision operators. The computational procedure is based on the Fourier transform of the partial elastic and inelastic collision operators and exploits the fact that these can be written as weighted convolutions in Fourier space with no restriction on the cross-section model. The conservation of mass, momentum and energy during collisions is enforced through the solution of constrained optimization problems. Numerical solutions are obtained for both space homogeneous and space in-homogeneous problems. Computational results are compared with those obtained by means of the DSMC method in order to assess the accuracy of the proposed spectral-Lagrangian method.       
\end{abstract}

\begin{keyword}
Boltzmann Equation \sep Fourier Transform \sep Spectral Methods \sep Lagrange Multipliers \sep Rarefied Gas-Dynamics
\end{keyword}
\end{frontmatter}


\section{Introduction}
Rarefied gas-dynamics has a broad domain of applications ranging from the study of the early phase of spacecraft entry into planetary atmospheres to the investigation of evaporaton and condensation phenomena \cite{Cerc_book,aoki_ev_1994,Frezzotti_2005}.       
    
The degree of rarefaction in a flow depends on the local value of the Knudsen number $Kn$ \cite{Cerc_book}. This is defined as the ratio between the mean free path and a characteristic length of the problem. The higher is the value of the Knudsen number, the more important rarefied gas effects are. When the Knudsen number exceeds values of the order of $0.01$, rarefied gas effects start to become important and attempts to compute rarefied flows by means of a hydrodynamic description based on the Navier-Stokes equations can give inaccurate results. This is precisely due to the failure of Newton's and Fourier's law for the stress tensor and the heat flux vector, respectively, in the rarefied regime \cite{Ferziger_book}. When the medium (gas) is dilute, the Boltzmann equation provides an adequate kinetic description \cite{Cerc_book,Ferziger_book,Chapman_book,Giov_book}. The Boltzmann equation is an integro-differential equation that describes the evolution in the phase-space of the velocity distribution function of the gas species. Once the distribution function known, it is possible to compute macroscopic observables such as density and hydrodynamic velocity by means of suitable moments.          

The solution of the Boltzmann equation by means of numerical techniques represents a computational challenge. This is due to the integro-differential nature of the equation. Another source of difficulty is the high-dimensionality of the problem, since numerical solutions must be sought in the phase-space. In the 1960's the DSMC (Direct-Simulation-Monte-Carlo) method \cite{Bird_book} was developed for obtaining stochastic solutions of the Boltzmann equation. The former is a particle-based technique and has been proven to be accurate \cite{Ivanov_RGD25}. However, it shares the drawbacks of stochastic methods, the main one being the presence of noise in the numerical results. This problem affects, in particular, the accuracy of the solution for low speed and unsteady flows. At the time when the DSMC method was being formulated, the computational power available was quite limited. Deterministic solutions of the Boltzmann equation could be obtained only in the case of model Boltzmann equations \cite{Segal_Ferziger} where the Boltzmann collision operator was replaced by simpler phenomenological expressions (such as that formulated by Bhatnagar, Gross and Krook - BGK model \cite{BGK_paper}). The continuous enhancement of computer performance has encouraged the development of deterministic numerical methods for the Boltzmann equation (with no simplifying assumption on the collision operator). These comprise, among all, discrete velocity models \cite{Tcher,Morris_JCP,josyula_phys_fluids_2011}, numerical kernel methods \cite{ohwada_1993,kosuge_2001} and spectral methods \cite{Bobylev_Rjas,Gamba1,Gamba2,Russo,Jeff1_RGD28}. The main advantage of a deterministic method over the DSMC technique is that the numerical solution obtained is not affected by noise. Deterministic methods can also be applied to flow problems in the hydrodynamic and transition regime, where the use of the DSMC method becomes prohibitively expensive \cite{Kobolov}.   

The purpose of the paper is to extend an existing spectral-Lagrangian method for the Boltzmann equation for a pure gas without internal energy \cite{Gamba1,Gamba2} to a multi-energy level gas. The proposed numerical method can be used for any cross-section model, accounts for elastic and inelastic collisions and allows for the conservation of mass, momentum and energy during collisions. The evaluation of the partial elastic and inelastic collision operators is performed in a fully deterministic manner based on their Fourier transform. In the authors' opinion, this is an important aspect, as in discrete velocity models the same operation is often accomplished stochastically by means of Monte-Carlo integration techniques.    

The paper is structured as follows. In Sect. \ref{sec:phys} the physical model is introduced. In Sect. \ref{sec:phys_Fou} some important features of the Fourier transform of the partial elastic and inelastic collision operators are obtained. The numerical method is described in detail in Sect. \ref{sec:num}. Computational results are given in Sect. \ref{sec:res}. Conclusions are outlined in Sect. \ref{sec:concl}.

\section{Physical model}\label{sec:phys}
\subsection{Assumptions}\label{sec:simpl}
The gas is composed of identical particles with internal degrees of freedom. Based on a quasi-classical approach, it is assumed that the particles may have only certain  discrete internal energy levels (treated as separate species). The indices associated to the internal energy levels (species) are stored in the set $\Sset = \left\{1,\ldots,\Ns\right\}$ (with $\Ns$ being the number of species). The quantities $\mi$, $\Einti$ and $\gi$ indicate, respectively, the mass, the internal energy and the degeneracy of the species $i \in \Sset$. 
 
The following assumptions are introduced for the physical model:
\begin{enumerate}
   \item The gas is dilute and composed of point particles. 
   \item There are no external forces.
   \item The inert particle interactions are binary collisions: 
         \be \label{eq:coll_type}
            \inter, \quad \ijhlset.
         \ee
       \bi 
          \item Elastic collision: $i = k$ and $j = l$.
          \item Inelastic collision: $\jhl \in \Cinset$. The set $\Cinset$ stores the ordered triplets $\jhl$ for all the possible inelastic collisions involving the species $i$ as the first reactant in Eq. \eqnref{eq:coll_type} and is defined as:
           \be \label{eq:in_coll_set}
             \Cinset =  \left\{\jhl \in \left[\Sset \, \times \, \Sset \, \times \, \Sset \, \diagdown \, \bigcup_{s\in\,\Sset} (s, \, i, \, s)\right] \right\}, \quad \iset.
           \ee 
       \ei
      The net internal energy trough the collision is defined by the expression $\DEijhl = \Einth + \Eintl - \Einti - \Eintj$. Notice that the mass is identical for all particles ($m$). The species index (in Eqs. \eqnref{eq:coll_type} - \eqnref{eq:in_coll_set} and in what follows) is kept for generality and to consider the particular case of a mixture of monatomic gases without internal energy.
   \item The reactive collisions are not accounted for. 
\end{enumerate}
\subsection{The Boltzmann equation}
Based on the hypothesis introduced in Sect. \ref{sec:simpl}, a Boltzmann equation can be written for the velocity distribution function $\vdfi (\xb, \Vi, t)$ of the species $i$ (in what follows, only the velocity dependence of the velocity distribution function is explicitly stated):
\be \label{eq:be}
 \fr{\pa \vdfi}{\pa t} + \Vi \cdot \fr{\pa \vdfi}{\pa \xb} = \totQel + \totQin, \quad \iset,
\ee
where the operators in Eq. \eqnref{eq:be} are, respectively, the elastic and inelastic collision operators for the species $i$:
\be
\totQel  = \totQeldef, \quad \totQin = \!\!\!\!\! \totQindef, \quad \iset. \label{eq:Qsumin}
\ee 
The partial elastic and inelastic collision operators are defined as: 
\begin{IEEEeqnarray}{rCl}
\Qel & = & \DomII \left[ \vdfi (\Vip) \, \vdfj (\Vjp) - \vdfi (\Vi) \, \vdfj (\Vj) \right] \sigel \, u \, d\omegabp \, d\Vj, \quad \ijset, \label{eq:Qel} \\
\Qin & = & \DomII \left[ \fr{\gi \, \gj}{\gh \, \gl} \vdfh (\Vip) \, \vdfl (\Vjp) - \vdfi (\Vi) \, \vdfj (\Vj) \right] \sigin \, u \, d\omegabp \, d\Vj, \quad \ijhlsetC. \label{eq:Qin}
\end{IEEEeqnarray} 
In Eqs. \eqnref{eq:Qel} - \eqnref{eq:Qin}, the quantities $\Vi$ and $\Vj$ are, respectively, the velocities of the species $i$ and $j$, $u$ is the relative velocity magnitude $u = \vert \Vi - \Vj \vert$, $\smash{\omegabp}$ is the unit vector along the scattering direction, and $\sigel$ and $\sigin$ are, respectively, the elastic and inelastic differential cross-sections. In Eqs. \eqnref{eq:Qel} - \eqnref{eq:Qin} (and in what follows), primed variables refer to post-collisional values. These are related to pre-collisional values through the conservation of mass, momentum and energy. The elastic and inelastic differential cross-sections ($\sigel$ and $\sigin$, respectively) satisfy the following micro-reversibility relations obtained from the application of Fermi's golden rule \cite{dellacherie}:
\begin{IEEEeqnarray}{rCl} 
\sigel \, u \, d\omegabp \, d\Vj \, d\Vi & = & \sigel \, \up \, d\omegab \, d\Vjp \, d\Vip, \quad \ijset \label{eq:micro_el} \\
\gi \, \gj \, \sigin \, u \, d\omegabp \, d\Vj \, d\Vi & = & \gh \, \gl \, \sigininv \, \up \, d\omegab \, d\Vjp \, d\Vip, \quad \ijhlsetC. \label{eq:micro_in}
\end{IEEEeqnarray}

Equation \eqnref{eq:be} may also be used for the case of a mixture of monatomic gases without internal energy ($\Einti = 0, \, \iset$). In this situation, all collisions are elastic and the inelastic collision operator $\totQin$ in Eq. \eqnref{eq:be} is zero.
\subsection{Collisional invariants}\label{sec:coll_inv}
During an elastic encounter, the number of particles in each internal energy level, the total momentum and the total energy are conserved. This leads to the introduction of the elastic collisional invariants \cite{Giov_book}: 
\begin{IEEEeqnarray}{rCl} 
\collinv{\elrm}{r}{i} & = & \mi \, {\delta}_{i \, r}, \quad r \in\,\Sset, \label{eq:coll_inv_el1}\\
\collinv{\elrm}{\Ns+\nu}{i} & = & \mi \, \val, \quad \nu \, \in \, \left\{1,2,3 \right\}, \, \alset, \label{eq:coll_inv_el_mom}\\
\collinv{\elrm}{\Ns+4}{i} & = & \half \mi \, \vs,\quad \iset, \label{eq:coll_inv_el2}
\end{IEEEeqnarray} 
where the correspondence between the indices $\nu$ and $\alpha$ in Eq. \eqnref{eq:coll_inv_el_mom} is such that $\nu = 1,2,3$ for $\alpha = x,y,z$, respectively. In Eqs. \eqnref{eq:coll_inv_el1} - \eqnref{eq:coll_inv_el2} the symbol ${\delta}_{i \, r}$ stands for Kronecker's delta, while $\val$ and $\vs$ are, respectively, the generic Cartesian component and the magnitude-squared of the velocity vector $\Vi$. After introducing the set of indices for the elastic collisional invariants $\Iel = \left\{1,\ldots,\Ns + 4\right\}$, it is possible to write the relation $\smash{\collinv{\elrm}{\nu}{i}+\collinv{\elrm}{\nu}{j}=\collinv{\elrm}{\nu}{i}+\collinv{\elrm}{\nu}{j} \, (\nu \in \, \Iel})$  in order to express the conservation of the number of particles in each internal energy level, total momentum and energy for the elastic collision $i+j=i+j$. It can be shown that the kernel of the elastic collision operator $\totQel$ in Eq. \eqnref{eq:be} is spanned by the set of elastic collisional invariants \cite{Giov_book}: 
\be \label{eq:Qel_inv} 
\sumins \, \DomI \!\!\! \collinv{\elrm}{\nu}{i} \, \totQel \, d\Vi = 0, \quad \nu \in\,\Iel.
\ee

During an inelastic encounter, due to the transitions among the internal energy levels, the total number of particles, momentum and energy are conserved. This leads to the introduction of the inelastic collisional invariants \cite{Giov_book}:    
\begin{IEEEeqnarray}{rCl}
\collinv{\inrm}{1}{i} & = & \mi, \label{eq:coll_inv_in1}\\
\collinv{\inrm}{1 + \nu}{i} & = & \mi \, \val, \quad \nu \, \in \, \left\{1,2,3 \right\}, \, \alset, \\
\collinv{\inrm}{5}{i} & = & \half  \mi \, \vs + \Einti, \quad \iset. \label{eq:coll_inv_in2}
\end{IEEEeqnarray}
After introducing the set of indices for the inelastic collisional invariants $\Iin = \left\{1,\ldots,5\right\}$, it is possible to write the relation $\smash{\collinv{\inrm}{\nu}{i}+\collinv{\inrm}{\nu}{j}=\collinv{\inrm}{\nu}{k}+\collinv{\inrm}{\nu}{l} \, (\nu \in \, \Iin)}$  in order to express the conservation of the total number of particles, momentum and energy for the inelastic collision $\inter$. It can be shown that the kernel of the inelastic collision operator $\totQin$ in Eq. \eqnref{eq:be} is spanned by the set of inelastic collisional invariants \cite{Giov_book}: 
\be \label{eq:Qin_inv} 
\sumins \, \DomI \!\!\!\! \collinv{\inrm}{\nu}{i}  \, \totQin \, d\Vi = 0, \quad \nu \in\,\Iin.
\ee
\subsection{Conserved macroscopic moments}\label{sec:mom_cons}
The elastic and inelastic collisional invariants (Eqs. \eqnref{eq:coll_inv_el1} - \eqnref{eq:coll_inv_el2}  and Eqs. \eqnref{eq:coll_inv_in1} - \eqnref{eq:coll_inv_in2}) introduced in Sect. \ref{sec:coll_inv} allows for the introduction of the following flow macroscopic quantities (in the hydrodynamic frame) as average microscopic quantities: 
\begin{IEEEeqnarray}{rClrCl}
\rhoj & = & \sumins \, \DomI \!\!\!\! \collinv{\elrm}{i}{j} \vdfj(\Vi) \, d\Vi, \quad \jset, \quad &  \rho & = & \sumins \, \DomI \!\!\!\! \collinv{\inrm}{1}{i} \vdfi(\Vi) \, d\Vi, \label{eq:dens_macro} \\
\rho \, V_{\alpha} & = & \sumins \, \DomI \!\!\!\! \collinv{\elrm}{\Ns + \nu}{i} \vdfi(\Vi)\, d\Vi, \quad & \rho \, V_{\alpha} & = & \sumins \, \DomI \!\!\!\! \collinv{\inrm}{1 + \nu}{i} \vdfi(\Vi) \, d\Vi, \label{eq:mom_macro} \\
\rho \, \etr  + \rho \fr{V^{\, 2}}{2}  & = & \sumins \, \DomI \!\!\!\! \collinv{\elrm}{\Ns + 4}{i} \vdfi(\Vi)\, d\Vi, \quad \quad & \rho \, \etr +  \rho \, \eint + \rho \fr{V^{\, 2}}{2}  & = & \sumins \, \DomI \!\!\!\! \collinv{\inrm}{4}{i} \vdfi(\Vi)\, d\Vi, \label{eq:en_macro}
\end{IEEEeqnarray}
with $\alset$ and $\nu \, \in \, \left\{1,2,3\right\}$ in Eq. \eqnref{eq:mom_macro}. In Eqs. \eqnref{eq:dens_macro} - \eqnref{eq:en_macro}, the quantity $\rhoj$ is the density of the species $j$, $\smash{\rho = \sumins \rhoi}$ is the gas (or mixture) density, $V_{\alpha}$ and $V^{\, 2}$ are, respectively, the generic Cartesian component and the magnitude-squared of the hydrodynamic velocity vector $\Vvec$, while $\etr$ and $\eint$ are, respectively, the gas specific translational and internal energy. The macroscopic moments defined in Eqs. \eqnref{eq:dens_macro} - \eqnref{eq:en_macro} represent the quantities that are conserved in a flow in view of the properties satisfied by the elastic and inelastic collision operators given in Eq. \eqnref{eq:Qel_inv} and Eq. \eqnref{eq:Qin_inv}, respectively.   
\subsection{The Maxwell-Boltzmann velocity distribution function}\label{sec:equil}
Under thermodynamic equilibrium conditions, the solution of the Boltzmann equation (Eq. \eqnref{eq:be}) is given by the Maxwell-Boltzmann velocity distribution function \cite{Ferziger_book}:
\be \label{eq:MB} 
f^{\, \eq}_i (\Vi) = \fr{\rhoieq}{\mi} \left(\fr{\mi}{2 \, \pi \, \kb  \, \Teq}\right)^{3/2} \!\!\!\!\! \exp \left(-\fr{\mi \left|\Vi - \Vvec \right|^{\, 2}}{2 \, \kb \, \Teq} \right), \quad \iset,
\ee
where the superscript $\eq$ stands for equilibrium. In Eq. \eqnref{eq:MB}, $\Teq$ is the gas (equilibrium) temperature and $\kb$ Boltzmann's constant. The density of the species $i$ at equilibrium is obtained through the Boltzmann distribution law: 
\be \label{eq:Boltz_distr}
\fr{\rhoieq}{\rhoeq} = \fr{\gi \exp\left( -\fr{\Einti}{\kb \, \Teq}\right)}{\Qint}, \quad \iset,
\ee
where the gas internal partition function is given by:
\be 
\Qint = \sumins \! \gi \exp\left(-\fr{\Einti}{\kb \, \Teq}\right), 
\ee
When the velocity distribution function is Maxwell-Boltzmann (Eq. \eqnref{eq:MB}), the gas specific translational and internal energy are given in the expressions \cite{Ferziger_book}:  
\begin{IEEEeqnarray}{rCl}
\etreq  & = & \fr{1}{\rhoeq} \sumins \fr{3}{2} \nbieq \, \kb \, \Teq, \label{eq:gas_tr} \\
\einteq & = & \fr{1}{\Qint} \sumins \! \fr{\Einti}{\mi} \gi \exp\left(-\fr{\Einti}{\kb \, \Teq}\right), \label{eq:gas_int}
\end{IEEEeqnarray}   
where the number density of the species $i$ in Eq. \eqnref{eq:gas_tr} is $\nbieq = \rhoieq/\mi$. The gas pressure is given by Dalton's law of partial pressures, $\peq = \sumins \nbieq \, \kb \, \Teq$.       

The above equations (with the exception of Eq. \eqnref{eq:Boltz_distr}) can be also used for the particular case of a mixture of monatomic gases without internal energy. In this situation, the gas specific internal energy (Eq. \eqnref{eq:gas_int}) is zero. 
\subsection{Non-equilibrium}\label{sec:neq} 
Outside of equilibrium conditions, translational and internal temperatures are introduced: 
\bi
   \item Species translational temperature components: 
    \be \label{eq:neq_mom_Tr1}
      \Tial = \fr{\mi}{\nbi \, \kb} \!\! \DomI \!\!\!\! \left(\val - \Val \right)^{\, 2} \vdfi(\Vi) \, d\Vi, \quad \iset, \, \alset.
    \ee
   \item Species translational temperature:
    \be 
      \Ti = \fr{1}{3} \sum_{\alset} \!\!\!\! \Tial, \quad \iset.
    \ee
   \item Translational temperature components:
    \be \label{eq:neq_mom_Tr}
      \Tal = \fr{1}{n}  \sumins \nbi \, \Tial, \quad \alset.
    \ee
   \item Translational temperature:
    \be \label{eq:neq_mom_Tr2}
      T = \fr{1}{3} \sum_{\alset} \!\!\!\! \Tal, \quad \iset.
    \ee
   \item Internal temperature:
    \be \label{eq:neq_mom_Tint}
      \sumins \! \nbi \, \Einti  = \rho \, \einteq(\Tint).
    \ee
\ei 
The gas number density in Eq. \eqnref{eq:neq_mom_Tr} is $n = \sumins \nbi$. Notice that Eq. \eqnref{eq:neq_mom_Tint} only provides an implicit definition for the internal temperature. This is due to the fact that the gas specific internal energy (Eq. \eqnref{eq:gas_int}) is a non-linear function of the temperature. The gas pressure is always computed by means of Dalton's law of partial pressures, $p = \sumins \nbi \, \kb \, \Ti$. Other macroscopic moments of interest in non-equilibrium conditions are:
\bi 
   \item Species diffusion velocity:
      \be 
        \Vdiffi = \fr{1}{\nbi} \!\!\!\! \DomI \!\!\!\! (\Vi - \Vvec) \vdfi(\Vi) \, d\Vi, \quad \iset.
      \ee
   \item Viscous stress tensor:
      \be 
        \taumix = \!\! \sumins \, \DomI \!\!\!\! \mi \left(\Vi - \Vvec \right) \otimes \left(\Vi - \Vvec \right) \vdfi (\Vi) \, d\Vi - p \, \mbf{I},
      \ee
      where $\mbf{I}$ is the second order identity tensor.
   \item Heat flux vector:  
      \be 
         \qmix = \!\! \sumins \, \DomI \!\!\!\! \left(\Vi - \Vvec \right) \left( \half \mi \left| \Vi - \Vvec \right|^{\, 2} + \Einti\right) \vdfi(\Vi) \, d\Vi.
      \ee
\ei 
\section{The Fourier transform of the partial elastic and inelastic collision operators}\label{sec:phys_Fou}
The numerical method proposed in Sect. \ref{sec:num} makes use of the Fourier transform  of the partial elastic and inelastic collision operators (Eqs. \eqnref{eq:Qel} - \eqnref{eq:Qin}). The starting point is the weak form of the partial elastic and inelastic collision operators \cite{Ferziger_book,Bobylev_Fourier}: 
\begin{IEEEeqnarray}{rCl} 
\DomI \!\!\!\!\! \Phii (\Vi) \, \Qel \, d\Vi & = & \!\! \DomIII \!\! \left[\Phii (\Vip) - \Phii (\Vi)\right] \vdfi(\Vi) \, \vdfj(\Vj) \, \sigel \, u \, d\omegabp \, d\Vj \, d\Vi,\quad i,j\in\,\Sset, \label{eq:wQel} \\
\DomI \!\!\!\!\! \Phii (\Vi) \, \Qin \, d \Vi & = & \!\! \DomIIIp \!\! \Phii (\Vi) \, \vdfh (\Vip) \, \vdfl (\Vjp) \, \sigininv \, \up \, d\omegab \, d\Vjp \, d\Vip \,\, - \,  \DomIII \!\! \Phii (\Vi) \, \vdfi (\Vi) \, \vdfj (\Vj) \, \sigin \, u \, d\omegabp \, d\Vj \, d\Vi, \nonumber \\
&&\ijhlsetC, \label{eq:wQin}
\end{IEEEeqnarray}
where the function $\Phii (\Vi)$ in Eqs. \eqnref{eq:wQel} - \eqnref{eq:wQin} is a smooth test function of the velocity vector $\Vi$. The substitution of a Fourier velocity mode $\Phii (\Vi) = (2\pi)^{-3/2} \exp (- \imath \, \boldsymbol{\zeta} \cdot \Vi)$ in Eqs. \eqnref{eq:wQel} - \eqnref{eq:wQin} gives the Fourier transform of the partial elastic and inelastic collision operators. 
\newtheorem*{WF}{Remark}
\begin{WF}
For the partial elastic collision operator $\Qel$ the weak form (Eq. \eqnref{eq:wQel}) is obtained by applying the usual technique of swapping between primed and un-primed variables in the integral and by exploiting micro-reversibility (Eq. \eqnref{eq:micro_el}). Since in an elastic collision there are no transitions between the internal energy levels, swapping between primed and un-primed variables has no effect on the species index. This allows for casting the weak form into a unique integral unloving the species velocity distribution function in the pre-collision state. The same result cannot be obtained the case of an inelastic collision (swapping between primed and un-primed variables leads to a species index change). The weak form of the partial inelastic collision operator $\smash{\Qin}$ (Eq. \eqnref{eq:wQin}) is obtained by applying micro-reversibility (Eq. \eqnref{eq:micro_in}) to the gain part of the operator, while the loss part is left unchanged. As discussed by Dellacherie \cite{dellacherie}, alternative expressions to that given in Eq. \eqnref{eq:wQin} can be obtained. However, the one given in Eq. \eqnref{eq:wQin} is the most suited for the present work.
\end{WF}
\newtheorem{FT}{Proposition}[section]
\begin{FT}
The Fourier transform $\FQel (\fa)$ and $\FQin (\fa)$ of the partial elastic and inelastic collision operators ($\Qel$ and $\Qin$, respectively) can be written as weighted convolutions in Fourier space:
\begin{IEEEeqnarray}{rCl} 
\FQel (\fa) & = & \!\!\! \DomIF \!\!\!\! \Fvdfi \left(\fa - \fb \right) \, \Fvdfj (\fb) \, \FFGel \left(\fa,\fb \right) d\fb, \quad \ijset, \label{eq:FT_theor}\\ 
\FQin (\fa) & = & \!\!\! \DomIF \!\!\!\! \left[ \Fvdfh \left(\fa - \fb \right) \, \Fvdfl (\fb) \, \FFGinG \left(\fa,\fb \right) - \Fvdfi \left(\fa - \fb \right) \, \Fvdfj (\fb) \, \FFGinL \left(\fb \right) \right] d\fb, \quad \ijhlsetC. \label{eq:FT_theor2}
\end{IEEEeqnarray}
In Eqs. \eqnref{eq:FT_theor} - \eqnref{eq:FT_theor2}, the quantities $\Fvdfi$, $\Fvdfj$, $\Fvdfh$ and $\Fvdfl$ are, respectively, the Fourier transform of the velocity distribution functions of the species $i$, $j$, $k$ and $l$, respectively, while the functions $\smash{\FFGel\left(\fa,\fb \right)}$, $\smash{\FFGinG\left(\fa,\fb \right)}$ and $\smash{\FFGinL\left(\fb \right)}$ are convolution weights defined as:
\begin{IEEEeqnarray}{rCl}
\FFGel \left(\fa,\fb \right) & = & \scale \DomIIuom \!\! u \, \sigel \, \left\{ \exp\left[- \imath \,  \fr{\muij}{\mi} \fa \cdot \left(\urelp - \urel  \right) \right] - 1\right\} \, \exp\left(- \imath \, \fb \cdot \urel \right)  \,  d\omegabp \, d\urel, \quad \ijset, \label{eq:Wel} \\
\FFGinG \left(\fa,\fb \right) & = & \scale \DomIIuomp \!\! \up \, \sigininv \, \exp\left[- \imath \,  \fr{\muij}{\mi} \fa \cdot \left(\urel - \urelp \right) \right] \, \exp\left(- \imath \, \fb \cdot \urelp \right)  \,  d\omegab \, d\urelp,  \label{eq:Win1} \\
\FFGinL \left(\fb \right) & = &  \scale \DomIIuom \!\! u \, \sigin \, \exp\left(- \imath \, \fb \cdot \urel \right)  \,  d\omegabp \, d\urel, \quad \ijhlsetC. \label{eq:Win} 
\end{IEEEeqnarray}
where the symbol $\muij$ in Eqs. \eqnref{eq:Wel} - \eqnref{eq:Win1} stands for the reduced mass of the species $i$ and $j$, $\muij = \mi \, \mj /(\mi + \mj)$. Notice that in obtaining Eq. \eqnref{eq:Win1} the relation $\muij = \muipjp $ (valid for a multi-energy level gas) has been exploited. 
\end{FT}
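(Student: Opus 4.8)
The plan is to substitute the Fourier velocity mode $\Phii(\Vi)=(2\pi)^{-3/2}\exp(-\imath\,\fa\cdot\Vi)$ directly into the weak forms Eqs.~\eqnref{eq:wQel} and~\eqnref{eq:wQin}. With this test function the left-hand sides become exactly $\FQel(\fa)$ and $\FQin(\fa)$, so the entire task reduces to recasting the right-hand sides as the convolutions of Eqs.~\eqnref{eq:FT_theor} and~\eqnref{eq:FT_theor2}. The engine that produces the convolution is the classical Bobylev device: I would replace each distribution function by its inverse transform $\vdfi(\Vi)=(2\pi)^{-3/2}\!\int\Fvdfi(\boldsymbol{\eta}_1)\exp(\imath\,\boldsymbol{\eta}_1\cdot\Vi)\,d\boldsymbol{\eta}_1$ (and likewise for the collision partner), pass from the colliding pair to center-of-mass and relative variables $(\boldsymbol{G},\urel)$ with unit Jacobian, and integrate out $\boldsymbol{G}$. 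That last integration yields $(2\pi)^3\,\delta(\boldsymbol{\eta}_1+\boldsymbol{\eta}_2-\fa)$, which collapses one Fourier integral and imposes $\boldsymbol{\eta}_1=\fa-\boldsymbol{\eta}_2$; renaming $\boldsymbol{\eta}_2=\fb$ then exposes the convolution, and tracking the three powers of $2\pi$ reproduces the prefactor $\scale$.

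First I would carry this out for the elastic operator. After substitution the integrand carries the bracket $\exp(-\imath\,\fa\cdot\Vip)-\exp(-\imath\,\fa\cdot\Vi)$; writing $\Vi=\boldsymbol{G}+(\muij/\mi)\urel$ and using the elastic relation $\Vip-\Vi=(\muij/\mi)(\urelp-\urel)$, the $\boldsymbol{G}$-phase factors out cleanly and produces the delta. The exponentials in $\urel$ surviving from $\Fvdfi$ and $\Fvdfj$ combine, through the identity $\muij(1/\mi+1/\mj)=1$, into $\exp(\imath[(\muij/\mi)\fa-\fb]\cdot\urel)$; multiplying by the bracket collapses the whole thing to $\{\exp[-\imath(\muij/\mi)\fa\cdot(\urelp-\urel)]-1\}\exp(-\imath\,\fb\cdot\urel)$, which is precisely the weight $\FFGel(\fa,\fb)$ of Eq.~\eqnref{eq:Wel} multiplying $\Fvdfi(\fa-\fb)\,\Fvdfj(\fb)$.

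Next I would split the inelastic weak form Eq.~\eqnref{eq:wQin} into its loss and gain parts. The loss part is structurally identical to the elastic loss term, and the same center-of-mass argument yields $\Fvdfi(\fa-\fb)\,\Fvdfj(\fb)\,\FFGinL(\fb)$ with $\FFGinL$ as in Eq.~\eqnref{eq:Win}. The gain part is the delicate one: the test function is still carried by the pre-collision velocity $\Vi$, whereas $\Fvdfh$ and $\Fvdfl$ are carried by the post-collision velocities $\Vip,\Vjp$, over which the integration now runs. I would introduce $(\boldsymbol{G},\urelp)$ for the primed pair and use that $\boldsymbol{G}$ is collisionally conserved, so the $\Phii$-phase contributes $\exp(-\imath(\muij/\mi)\fa\cdot\urel)$ while the $\Fvdfh,\Fvdfl$-phases contribute, after the $\boldsymbol{G}$-integration and the identity $\muipjp(1/\mi+1/\mj)=1$, the factor $\exp(\imath[(\muipjp/\mi)\fa-\fb]\cdot\urelp)$. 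One must also remember that here $\urel$ is not a free variable but is fixed as a function of $\urelp$ and $\omegab$ through energy conservation, $\half\muij\,(u^{2}-(\up)^{2})=\DEijhl$, so that the integration is correctly written over $d\omegab\,d\urelp$.

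The hard part will be the final merging in the gain computation: combining $\exp(-\imath(\muij/\mi)\fa\cdot\urel)$ with $\exp(\imath(\muipjp/\mi)\fa\cdot\urelp)$ into the single exponential $\exp[-\imath(\muij/\mi)\fa\cdot(\urel-\urelp)]$ that appears in Eq.~\eqnref{eq:Win1}. This collapse succeeds only when the reduced-mass fraction attached to the pre-collision phase equals the one attached to the post-collision phase, i.e. exactly when $\muij=\muipjp$ — the identity quoted in the statement, which holds here because all particles share the common mass $\mi$. Were it to fail, the two relative-velocity phases could not be merged and the gain weight $\FFGinG$ would not reduce to the compact form of Eq.~\eqnref{eq:Win1}. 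The remainder is bookkeeping: verifying the unit Jacobian of the $(\Vi,\Vj)\mapsto(\boldsymbol{G},\urel)$ change of variables and carrying $\scale$ consistently through the gain and loss contributions so that Eq.~\eqnref{eq:FT_theor2} assembles correctly.
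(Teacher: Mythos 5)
Your proposal is correct and takes essentially the same route as the paper: the paper's entire proof is the direct substitution of the Fourier mode $\Phii (\Vi) = (2\pi)^{-3/2} \exp (- \imath \, \boldsymbol{\zeta} \cdot \Vi)$ into the weak forms, with the remaining manipulations dismissed as ``some algebra.'' Your center-of-mass/relative-velocity change of variables, the delta function produced by the center-of-mass integration, and the observation that merging the gain-term phases hinges on $\muij = \muipjp$ (guaranteed by the common particle mass) are precisely the algebra the paper leaves implicit.
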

\begin{proof}
The direct substitution of $\Phii (\Vi) = (2\pi)^{-3/2} \exp (- \imath \, \boldsymbol{\zeta} \cdot \Vi)$ in Eqs. \eqnref{eq:wQel} - \eqnref{eq:wQin} gives, after some algebra, the thesis.
\end{proof}

\noindent The above proposition, leads to the following observations:
\begin{enumerate}
   \item The convolution weights $\smash{\FFGel \left(\fa,\fb \right)}$, $\smash{\FFGinG \left(\fa,\fb \right)}$ and $\smash{\FFGinL \left(\fb \right)}$ in Eqs. \eqnref{eq:Wel} - \eqnref{eq:Win} only depend on the differential cross-section. No dependence on the value of the species velocity distribution function occurs. This fact can be exploited to develop a computational method (see Sect. \ref{sec:num}) that makes use of Eqs. \eqnref{eq:FT_theor} - \eqnref{eq:FT_theor2} for the numerical evaluation of the collision operators (the weights associated to each collision can be pre-computed). 
   \item The convolution weights $\smash{\FFGinG \left(\fa,\fb \right)}$ and $\smash{\FFGinL \left(\fb \right)}$ in Eq. \eqnref{eq:FT_theor2} are associated to the gain and loss part of the partial inelastic collision operator $\Qin$ (Eq. \eqnref{eq:Qin}) and cannot be directly summed to give a unique convolution weight. This operation is possible only when the collision is elastic. For this case, it can be shown that $\smash{\FFGel \left(\fa,\fb \right) = {\hat{G}}^{\, ij}_{ij}(\fa,\fb) - {\hat{L}}^{\, ij}_{ij}(\fb)}$.  
  \item Since in the definition provided by Eqs. \eqnref{eq:Wel} - \eqnref{eq:Win} no assumption is made on the differential cross-section, anisotropic interactions can also be taken into account.
\end{enumerate} 

\noindent In the case of isotropic interactions (differential cross-section depending only on the relative velocity magnitude $u$), the mathematical expressions for the convolution weights $\smash{\FFGel \left(\fa,\fb \right)}$, $\smash{\FFGinG \left(\fa,\fb \right)}$ and $\smash{\FFGinL \left(\fb \right)}$ given in Eqs. \eqnref{eq:Wel} - \eqnref{eq:Win} simplify.   
\newtheorem{Welin}[FT]{Proposition}
\begin{Welin}
The convolution weights $\smash{\FFGel \left(\fa,\fb \right)}$, $\smash{\FFGinG \left(\fa,\fb \right)}$ and $\smash{\FFGinL \left(\fb \right)}$ appearing in the Fourier transform $\FQel (\fa)$ and $\FQin (\fa)$ of the partial elastic and inelastic collision operators ($\Qel$ and $\Qin$, respectively), reduce to one-dimensional integrals on the pre and post-collisional relative velocity magnitudes ($u$ and $u^{\prime}$, respectively) in the case of isotropic interactions:
\begin{IEEEeqnarray}{rCl}
\FFGel \left(\fa,\fb \right) & = &  4 \sqrt{2 \, \pi} \!\!\!\!\!\! \int\limits_{u \, \in \, [0, +\infty)}  \!\!\!\!\!\!\! \sigel \left[\bess \left( \zeta \, \fr{\muij}{\mi} u \right) \, \bess \left(\left| \fb  - \fa \, \fr{\muij}{\mi} \right| u \right) - \bess \left(\xi \, u \, \right) \right] \uc \, du, \quad \ijset, \label{eq:WEliso} \\
\FFGinG \left(\fa,\fb \right) & = & 4 \sqrt{2 \, \pi} \!\!\!\!\!\!\!\!\!\! \int\limits_{\up \, \in \, [\ustarG, +\infty)}  \!\!\!\!\!\!\!\!\!\!\! \sigininv \bess \left( \zeta \, \fr{\muij}{\mi} \sqrt{ \ups + 2\, \DEijhl / \muij} \right) \, \bess \left(\left| \fb  - \fa \, \fr{\muij}{\mi} \right| \up \right) \upc \, d\up,  \label{eq:WIniso1}\\ 
\FFGinL \left(\fb \right) & = &  4 \sqrt{2 \, \pi} \!\!\!\!\!\!\!\! \int\limits_{u \, \in \, [\ustarL, +\infty)}  \!\!\!\!\!\!\!\!\! \sigin \bess \left(\xi \, u \, \right) \uc \, du, \quad \ijhlsetC.  \label{eq:WIniso2}
\end{IEEEeqnarray}
In Eqs. \eqnref{eq:WEliso} - \eqnref{eq:WIniso2}, the function $\bess(x) = \sin(x)/x$ is the zero-order spherical Bessel function (or un-normalized sinc function) while the quantities $\zeta$ and $\xi$ are, respectively, the magnitudes of the vectors $\fa$ and $\fb$. The lower limits $\ustarG$ and $\ustarL$ for the integrals defining the gain and loss inelastic convolution weights ($\smash{\FFGinG \left(\fa,\fb \right)}$ and $\smash{\FFGinL \left(\fb \right)}$, respectively) in Eqs. \eqnref{eq:WIniso1} - \eqnref{eq:WIniso2} are:
\be
\ustarG = 
  \begin{cases}
   \sqrt{-\fr{2 \, \DEijhl}{\muij}} & {if \quad} \DEijhl < 0, \\
   0                             & {if \quad} \DEijhl \geq 0,
  \end{cases} 
\quad
\ustarL =
  \begin{cases}
   \sqrt{\fr{2 \, \DEijhl}{\muij}} & {if \quad} \DEijhl > 0, \\
   0                             & {if \quad} \DEijhl \leq 0.
  \end{cases} 
\ee 
\end{Welin}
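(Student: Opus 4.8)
The plan is to reduce each three-dimensional representation in Eqs.~\eqnref{eq:Wel}--\eqnref{eq:Win} to a single radial integral by collapsing every angular integration with the elementary identity
\begin{equation*}
\int_{S^{2}}\exp\!\left(-\imath\,\mathbf{k}\cdot\hat{n}\right)d\hat{n}=4\pi\,\bess\!\left(\left|\mathbf{k}\right|\right),
\end{equation*}
valid for any fixed vector $\mathbf{k}$, together with the isotropy hypothesis. First I would pass to spherical coordinates for the three-dimensional relative-velocity integration variable ($\urel$ in the elastic and loss weights, $\urelp$ in the gain weight), writing $d\urel=u^{2}\,du\,d\hat{u}$, so that the collisional speed factor $u$ and the Jacobian $u^{2}$ combine into the radial weight $\uc=u^{3}$. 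Because the cross-sections depend only on the relative-speed magnitude, each one factors out of its scattering-direction integral ($d\omegabp$ or $d\omegab$), which then contributes a bare $4\pi$. The surviving directional integrals are precisely of the form above, and the overall constant $\scale\,(4\pi)^{2}=4\sqrt{2\pi}$ records the two independent unit-sphere integrations present in each weight.

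For the loss weight $\FFGinL(\fb)$ (Eq.~\eqnref{eq:Win}) the reduction is immediate: the $d\omegabp$ integral yields $4\pi$, and the remaining integral over the direction of $\urel$ gives $4\pi\,\bess(\xi u)$, producing Eq.~\eqnref{eq:WIniso2}. For the elastic weight (Eq.~\eqnref{eq:Wel}) I would first distribute $\exp(-\imath\,\fb\cdot\urel)$ through the brace to obtain a difference of two exponentials; the ``$-1$'' term reproduces the $-\bess(\xi u)$ contribution, while in the remaining term I use $\urelp=u\,\omegabp$ (an elastic encounter preserves the relative-speed magnitude), so that the $d\omegabp$ integral produces $\bess(\zeta\,\muij u/\mi)$ and the integration over the direction of $\urel$ produces $\bess\!\left(\left|\fb-\fa\,\muij/\mi\right|u\right)$. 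Collecting terms gives Eq.~\eqnref{eq:WEliso}.

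The gain weight (Eq.~\eqnref{eq:Win1}) follows the same pattern, but here the two unit-sphere integrations are genuinely distinct: the $d\omegab$ integration fixes the direction of $\urel$, whereas its magnitude is slaved to $\up$ through energy conservation, $\half\,\muij u^{2}+\Einti+\Eintj=\half\,\muij u'^{\,2}+\Einth+\Eintl$, i.e.\ $u=\sqrt{u'^{\,2}+2\,\DEijhl/\muij}$; substituting this magnitude into $\bess(\zeta\,\muij u/\mi)$ yields the square-root argument of Eq.~\eqnref{eq:WIniso1}, while the integration over the direction of $\urelp$ supplies the second Bessel factor. The step I expect to demand the most care is the collision kinematics rather than the angular algebra: one must keep straight which relative velocity is the free integration variable and which is determined, and then read off the radial domains from the requirement that both relative speeds be real. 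Positivity of $u^{2}=u'^{\,2}+2\,\DEijhl/\muij$ forces $\up\geq\ustarG$ when $\DEijhl<0$ (endothermic inverse transition), while positivity of $u'^{\,2}=u^{2}-2\,\DEijhl/\muij$ in the loss weight forces $u\geq\ustarL$ when $\DEijhl>0$ (endothermic forward transition); the thresholds vanish in the complementary cases, which is exactly the stated form of $\ustarG$ and $\ustarL$.
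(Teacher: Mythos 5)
Your proof is correct and takes essentially the same approach as the paper's own proof, which simply invokes spherical coordinates in the integrals over $\urel$, $\urelp$, $\omegab$ and $\omegabp$ (with the pole aligned along the relevant Fourier vector) and leaves the rest as ``after some algebra.'' The details you supply --- the identity $\int_{S^{2}}\exp\left(-\imath\,\mathbf{k}\cdot\hat{n}\right)d\hat{n}=4\pi\,\bess\left(\left|\mathbf{k}\right|\right)$, the constant bookkeeping $\scale\,(4\pi)^{2}=4\sqrt{2\,\pi}$, the kinematic relation $u=\sqrt{u'^{\,2}+2\,\DEijhl/\muij}$ in the gain term, and the reality conditions yielding $\ustarG$ and $\ustarL$ --- are precisely the algebra the paper omits.
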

\begin{proof}
The use of a spherical coordinate system in the integrals over $\uvec$, $\uvecp$, $\omegab$ and $\omegabp$ in Eqs. \eqnref{eq:Wel} - \eqnref{eq:Win} gives, after some algebra, the thesis. As an example, the integral over $\omegabp$ in Eq. \eqnref{eq:Win} defining the convolution weight $\smash{\FFGinL \left(\fb \right)}$ can be computed by adopting a spherical coordinate system for the vector $\omegabp$ with the pole aligned along the direction of the vector $\fb$. A similar procedure can be used for the other integrals in Eqs. \eqnref{eq:Wel} - \eqnref{eq:Win}.  
\end{proof} 
\section{Numerical method}\label{sec:num}
The numerical method proposed for solving the Boltzmann equation (Eq. \eqnref{eq:be}) exploits the particularly simple structure assumed by the Fourier transform of the partial elastic and inelastic collision operators (weighted convolution in Fourier space - Eqs. \eqnref{eq:FT_theor} - \eqnref{eq:FT_theor2}). Only zero/one-dimensional flows are considered and the velocity space is always kept three-dimensional. This can be justified in view of the fact that the main purpose of the paper is to develop an algorithm for the evaluation of the collision operators allowing for the conservation of mass, momentum and energy during collisions (Eqs. \eqnref{eq:Qel_inv} and \eqnref{eq:Qin_inv}). The extension of the method to multi-dimensional flows is trivial, as the aforementioned algorithm remains the same whether the flow is multi-dimensional or not.

In the case when the flow is one-dimensional and its direction is aligned with the $x$ axis of a Cartesian reference frame $(O;x,y,z)$, the Boltzmann equation (Eq. \eqnref{eq:be}) becomes:
\be \label{eq:be1d}
\fr{\pa \vdfi}{\pa t} + \vx \fr{\pa \vdfi}{\pa x} = \totQel + \totQin, \quad \iset.
\ee           
In order obtain numerical solutions to Eq. \eqnref{eq:be1d}, the following steps have to be taken:
\begin{enumerate}
  \item Discretization of the phase-space,
  \item Choice of a time-marching method, 
  \item Development of a computational algorithm for an efficient evaluation of the collision operators in Eq. \eqnref{eq:be1d} allowing to satisfy the conservation requirements stated in Eqs. \eqnref{eq:Qel_inv} and \eqnref{eq:Qin_inv}.
\end{enumerate}
All the items of the previous list are described in Sects. \ref{sec:phase} - \ref{sec:Coll_alg}.
\subsection{Discretization of the phase-space}\label{sec:phase}
A Cartesian reference frame $(O;\vx,\vy,\vz)$ is introduced for the velocity space. The former is discretized by considering points falling inside a cube (with side semi-length $\Lv$) centered at the origin $O$:
\be \label{eq:set_vel}
\Vcal = \left\{\vvec = (\vx,\vy,\vz) \, \in \, \Rd \, \vert \, v_{\, \alpha} \, \in \, [-\Lv,\Lv), \, \alset \right\}.
\ee
The individual discrete velocity nodes belonging to the set $\Vcal$ in Eq. \eqnref{eq:set_vel} are obtained as follows. Let $\Dv$ be the velocity mesh spacing, defined as:
\be \label{eq:Dv}
\Dv = \fr{2 \, \Lv}{\Nv},
\ee
where $\Nv$ is the number of velocity nodes along the $\vx$, $\vy$ and $\vz$ directions, let $\kvec = (\kx, \ky, \kz)$ be the vector of indices associated to the discrete velocity node $\vk = (\vkx, \vky, \vkz)$ and let $\Vset$ be the set $\Vset = \left\{0,\ldots,\Nv - 1\right\}$. The discrete velocity node $\vk$ belonging to the set $\Vcal$ is computed as follows:
\be \label{eq:vk}
\vk = -\Lv \left(\ivx + \ivy  + \ivz \right) + \kvec \, \Dv, \quad \kvec = (\kx,\kz,\ky)\in\,\Vsetc. 
\ee 
In Eq. \eqnref{eq:vk}, the vectors $\ivx$, $\ivy$ and $\ivz$ are, respectively, the unit vectors of the $\vx$, $\vy$ and $\vz$ axes of the Cartesian frame $(O;\vx,\vy,\vz)$, and the set $\Vsetc$ is defined as $\Vsetc = \Vset \, \xrm \, \Vset \, \xrm \, \Vset$. A vector of integration weights $\smash{\wkset = ( \wkx, \wky, \wkz)}$ is introduced and associated to each discrete velocity node $\vk$.  

As mentioned before, the algorithm proposed for the evaluation of the collision operators (given in Sect. \ref{sec:Coll_alg}) is based on the Fourier transform of the former (Eqs. \eqnref{eq:FT_theor} - \eqnref{eq:FT_theor2}). This is the reason why a Fourier velocity space (associated to the velocity space described above) is introduced and discretized as follows. A Cartesian reference frame $(O;{\zeta}_x,{\zeta}_y,{\zeta}_z)$ in the Fourier velocity space is introduced and  the points falling inside a cube (with semi-length $\Leta$) centered at the origin $O$ are considered:
\be \label{eq:set_velF}
\VcalF = \left\{\fa = (\zetax,\zetay,\zetaz) \, \in \Rd \, \vert \, {\zeta}_{\, \alpha} \, \in [-\Leta,\Leta), \, \alset \right\}
\ee  
The discrete Fourier velocity nodes belonging to the set $\VcalF$ in Eq. \eqnref{eq:set_velF} are obtained as follows. Let $\Deta$ be the Fourier velocity mesh spacing, defined as:
\be \label{eq:DvF} 
\Deta = \fr{2 \, \Leta}{\Nv},
\ee
and let $\boldsymbol{\eps} = (\epsx, \epsy, \epsz)$ be the vector of indices associated to the discrete Fourier velocity node $\ze = (\zex, \zey, \zez)$. The discrete Fourier velocity node $\ze$ belonging to the set $\VcalF$ is computed as follows:
\be \label{eq:epsk}
\ze = -\Leta \left(\izetax + \izetay  + \izetaz \right) + \boldsymbol{\eps} \, \Deta, \quad \boldsymbol{\eps} = (\epsx,\epsy,\epsy)\in\,\Vsetc.  
\ee   
In Eq. \eqnref{eq:epsk}, the vectors $\izetax$, $\izetay$ and $\izetaz$ are, respectively, the unit vectors of the $\zetax$, $\zetay$ and $\zetaz$ axes of the Cartesian frame $(O;{\zeta}_x,{\zeta}_y,{\zeta}_z)$. A vector of integration weights $\smash{\wepsset = ( \wepsx, \wepsy, \wepsz)}$ is introduced and associated to each Fourier velocity node $\ze$.

In the present work, the semi-length $\Lv$ and the number of nodes $\Nv$ along each direction of the velocity space are considered as input parameters. The velocity mesh spacing $\Dv$ is then computed through Eq. \eqnref{eq:Dv}. The semi-length $\Leta$ and the mesh spacing $\Deta$ of the Fourier velocity space are found by imposing in Eq. \eqnref{eq:DvF} the condition:
\be \label{eq:FFT}
\Deta \, \Dv = \fr{2 \, \pi}{\Nv}.
\ee
The substitution of the expressions for $\Dv$ and $\Deta$ (Eq. \eqnref{eq:Dv} and Eq. \eqnref{eq:DvF}, respectively)  in Eq. \eqnref{eq:FFT} leads to:
\be \label{eq:Leta}
\Leta = \fr{\pi \Nv}{2 \, \Lv}.
\ee
In Eq. \eqnref{eq:Leta}, the semi-length $\Leta$ is completely determined from the values of the input parameters ($\Nv$ and $\Lv$). Once $\Leta$ computed, the Fourier velocity mesh spacing $\Deta$ is then found from Eq. \eqnref{eq:DvF}. The choose of a uniform mesh along each direction of the velocity spaces (physical and Fourier) and of the condition given by Eq. \eqnref{eq:FFT} are due to the use of the Fast-Fourier-Transform (FFT) algorithm \cite{Gamba1,Gamba2} for the evaluation of the Fourier and the inverse Fourier transforms. 

The position space is discretized by considering points belonging to the following subset $\Xcal$ of the $x$ axis:
\be \label{eq:domx}
\Xcal = \left\{ (x,0,0) \, \in \, \Re \, \vert \, x \, \in \, [-\Lmx,\Lpx] \right\},
\ee
where the quantities $\Lmx$ and $\Lmx$ in Eq. \eqnref{eq:domx} are both positive. A finite volume grid can be defined based on Eq. \eqnref{eq:domx}. Let $\Nx$ be the number of nodes in the position space, $\srm$ be the index corresponding to the node $\xs$ in the discretized position space and $\Xset$ the set $\Xset = \left\{0,\ldots,\Nx - 2\right\}$. The centroid location $\xcs$ and the volume $\Dcs$ of the cell $\srm$ (volume) contained between the nodes $\srm$ and $\srm + 1$ are computed as:
\begin{IEEEeqnarray}{rCl}
\xcs & = & \fr{1}{2}(\xsp + \xs),\\
\Dcs & = & \xsp - \xs, \quad \srm \in \Xset. \label{eq:vol}
\end{IEEEeqnarray}

The time domain is discretized as follows. Let $\NT$ be the number time-steps, $\Delta t_n$ the time-step value associated to the time-level $\tn$ and $\Tset$ the set $\Tset = \left\{0,\ldots, \NT\right\}$. The set of nodes of the discretized time-domain is then:
\be \label{eq:domt}
\Tcal = \left\{\tn = \sum_{m\leq n} \! \Delta t_m \, \in \, \Re \, \vert n,m \, \in \, \Tset \right\}.
\ee

For sake of later convenience, it is useful to introduce the following compact notation for the value of the velocity distribution function of the species $i$ at the point $(\xcs, \vk)$ of the discretized phase-space at the discrete time-level $\tn$: 
\be 
\fnum = \vdfi (\xcs, \vk, \tn), \quad \iset, \, \vk \, \in \, \Vcal,\, \xcs \, \in \, \Xcal,\, \tn \, \in \, \Tcal.
\ee 

\subsection{Time-marching method}\label{sec:op_split}
In order to obtain numerical solutions to Eq. \eqnref{eq:be1d}, the methods of lines is employed \cite{Hirsch_book}. The Finite volume method is firstly applied to Eq. \eqnref{eq:be1d} (written for each discrete velocity node as given in Eq. \eqnref{eq:vk}) in order to perform the discretization in the position space. Secondly, the semi-discrete set of equations obtained is integrated in time by means of a time-marching method. In the present work, explicit time-integration methods are considered due their ease of implementation and low memory requirements when compared with implicit methods \cite{Mieussens_JCP,Mieussens_Struchtrup}. 

The application of the Finite volume method to Eq. \eqnref{eq:be1d} written for the discrete velocity node $\vk$ leads to the following semi-discrete equation:
\be \label{eq:adv2}
\Dcs \fr{\pa \fsijk}{\pa t} + \Fluxp - \Fluxm = \Dcs \, {\Qi}_{\, \srm \, \kvec}, \quad \iset, \, s \, \in \, \Xset, \, \kvec \, \in \, \Vsetc,
\ee   
where $\smash{\Fluxp}$ and $\smash{\Fluxm}$ are, respectively, the numerical fluxes at the interfaces $\srm + 1/2$ and $\srm - 1/2$ of the cell $\srm$, $\Dcs$ is the volume of the cell $\srm$ (Eq. \eqnref{eq:vol}) and ${\Qi}_{\rm \, s\, \kvec}$ represents the sum of the elastic and inelastic collision operators for the species $i$ evaluated at the node $(\xcs,\vk)$ of the discretized phase-space (the algorithm for its numerical evaluation is explained in Sect. \ref{sec:Coll_alg})  The numerical flux $\Fluxp$ in Eq. \eqnref{eq:adv2} is computed by means of a second order slope-limited upwind scheme \cite{Hirsch_book}:
\be \label{eq:flux}
\Fluxp = \apk \, \fLsijk + \amk \, \fRspijk, \quad \iset, \, s \,\in \, \Xset, \, \kvec \, \in \, \Vsetc,
\ee 
where $\apk$ and $\amk$ in Eq. \eqnref{eq:flux} are, respectively, the positive and negative wave speeds:
\begin{IEEEeqnarray}{rCl}
\apk & = & \max(\vkx, 0),\\
\amk & = & \min(\vkx, 0), \quad \kx \, \in \, \Vset ,
\end{IEEEeqnarray}
and $\fLsijk$ and $\fRspijk$ are the reconstructed values of the distribution function at the left and right sides, respectively, of the interface $\srm + 1/2$ between the cells $\srm$ and $\rm s + 1$. The reconstructed values of the distribution functions ($\fLsijk$ and $\fRspijk$ in Eq. \eqnref{eq:flux}) are obtained by means of a limited MUSCL reconstruction \cite{vanLeer}: 
\begin{IEEEeqnarray}{rCl}
\fLsijk & = &  \fsijk + \half \phi (\rLrm) \left(\fsijk - \fsmijk\right),  \label{eq:rec1} 
\\
\fRspijk & = & \fspijk -\half \phi (\rRrm) \left(\fsppijk - \fspijk\right), \quad \iset, \, s \, \in \, \Xset, \, \kvec \, \in \, \Vsetc.  \label{eq:rec2} 
\end{IEEEeqnarray} 
In Eqs. \eqnref{eq:rec1} - \eqnref{eq:rec1}, $\phi(r)$ is a slope limiter function (such as those proposed by van Albada, van Leer \textit{et al} \cite{Hirsch_book}) and $\rLrm$ and $\rRrm$ are, respectively, the left and right ratios of consecutive differences:
\begin{IEEEeqnarray}{rCl} 
\rLrm & = & \fr{\fspijk - \fsijk}{\fsijk - \fsmijk}, \\
\rRrm & = & \fr{\fspijk - \fsijk}{\fsppijk - \fspijk}, \quad \iset, \, s \, \in \, \Xset, \, \kvec \, \in \, \Vsetc.
\end{IEEEeqnarray}   
Equation \eqnref{eq:adv2} is integrated in time by means of the Forward Euler method \cite{Hirsch_book}:
\be \label{eq:fe}
\fnump = \fnum -\fr{\Dts}{\Delta \xs}\left[\left(\Fluxpn - \Fluxmn \right) - \Dcs \, {Q_{i}}^{\, n}_{\, \srm \, \mbf{k}} \right],\quad \iset, \, s \, \in \, \Xset, \, \kvec \, \in \, \Vsetc, \, n \, \in \, \Tset.
\ee
The time-step $\Dts$ in Eq. \eqnref{eq:fe} is computed according to \cite{Mieussens_JCP}:
\be \label{eq:time_step}
\Dts = \fr{\mrm{CFL}}{\fr{1}{\Dtc} + \fr{\Lv}{\Dcs}}, \quad  s \, \in \, \Xset,
\ee
where CFL in Eq. \eqnref{eq:time_step} is the Courant-Friedrich-Lewi number \cite{Hirsch_book} and $\Dtc$ is the collision time-step. Equation \eqnref{eq:time_step} can be derived by means of an entropy dissipation analysis and is strictly valid for a model Boltzmann equation with a BGK collision operator \cite{Mieussens_JCP}. However, the use of Eq. \eqnref{eq:time_step} for the evaluation of the time-step did not lead to particular problems while performing the calculations presented in the paper.   

As alternative to the Forward Euler method, multi-stage schemes (such as Runge-Kutta methods \cite{Hirsch_book}) could be considered for the time-integration of Eq. \eqnref{eq:adv2}. Boundary conditions are applied through ghost cells \cite{Hirsch_book}.
\subsection{Algorithm for the evaluation of the collision operators}\label{sec:Coll_alg}
In order to evaluate the elastic and inelastic collision operators (Eq. \eqnref{eq:Qsumin}) on the discrete velocity nodes given by Eq. \eqnref{eq:vk}, the following algorithm is proposed. For the elastic collision $i+j=i+j$, the partial elastic collision operator $\Qel$ is computed as follows:
\begin{enumerate}
  \item Compute the Fourier transform of the velocity distribution function of the species $i$ and $j$: 
       $$
        {\hat{f}}_{i,j}(\fa) = \mathcal{F}(f_{i,j}(\Vi))\, \rightarrow \, O(N^{\, 3}_v \log \Nv).
       $$
  \item For $N^{\, 3}_v$ discrete Fourier velocity nodes compute the Fourier transform of the partial elastic collision operator by means of the weighted convolution in Fourier space (Eq. \eqnref{eq:FT_theor}):
        $$
        \FQel (\fa) = \int \!\! \Fvdfi \left(\fa - \fb \right) \, \Fvdfj (\fb) \, \FFGel \left(\fa,\fb \right) \, d\fb \, \rightarrow \, O(N^{\, 6}_v). \nonumber  
        $$
  \item Compute the inverse Fourier transform of the partial elastic collision operator: 
        $$
         {\tilde{Q}}_{\, ij}(\Vi) = \mathcal{F}^{-1}(\FQel (\fa))\, \rightarrow \, O(N^{\, 3}_v \log \Nv).
        $$
  \item For $N^{\, 3}_v$ discrete velocity nodes enforce conservation through the solution of a constrained optimization problem:
        $$
        Q_{\, ij}(\Vi) = \mathrm{Opt}({\tilde{Q}}_{\, ij}(\Vi))\, \rightarrow \, O(N^{\, 3}_v).\nonumber 
        $$
\end{enumerate}
The modification of the above procedure in the case of an inelastic collision is straightforward and can be deduced from Eq. \eqnref{eq:FT_theor2}. The global cost of the algorithm is $O(N^{\, 6}_v)$ (per partial collision operator) and the last step is performed in order to ensure conservation of mass, momentum and energy during collisions as stated in Eqs. \eqnref{eq:Qel_inv} (Eq. \eqnref{eq:Qin_inv} for an inelastic collision). This approach was originally proposed and formulated by Gamba \textit{et al} \cite{Gamba1,Gamba2} for the case of a pure gas without internal energy. In the present work, an extension of the original method to a multi-energy level gas is proposed. Due to the existence of separate sets of collisional invariants (elastic and inelastic), the conservation of mass, momentum and energy during collisions is enforced through the solution of two separate constrained optimization problems:
\begin{enumerate}
\item Elastic collisions:
      \be \label{eq:pel}
        \Pel = \left\{ \min\left( \sumijns \!\!\! \left|\QtVel - \QVel \right|^{\, 2}\right), \sumijns \!\!\! \Cel \, \QVel  = \zeroel \right\}.
      \ee
\item Inelastic collisions:
      \be \label{eq:pin}
        \Pin = \left\{ \min \left(\sumijhlns \!\!\!\!\!\!\! \left|\QtVin -  \QVin \right|^{\, 2} \right), \sumijhlns \!\!\!\!\!\! \Cin \, \QVin  = \zeroin \right\}.
      \ee 
\end{enumerate}
In Eqs. \eqnref{eq:pel} - \eqnref{eq:pin}, the vectors $\QVel$, $\QtVel$, $\QVin$ and $\QtVin$ store the values of the partial collision operators $\Qel$ and $\smash{\Qin}$, respectively, on the discrete velocity nodes given by Eq. \eqnref{eq:vk} (the tilde symbol is used to indicate the values obtained after the inversion of the Fourier transform that do not satisfy conservation). In the same equations, the constraints imposed represent the conservation requirements the collision operators must satisfy (Eqs. \eqnref{eq:Qel_inv} and \eqnref{eq:Qin_inv}). In view of the discretization introduced for the velocity space, this operation is realized at discrete level through multiplication with the elastic and inelastic integration matrices ($\Cel$ and $\Cin$, respectively). The columns of these matrices are precisely given by the elastic and inelastic collisional invariants (Eqs. \eqnref{eq:coll_inv_el1} - \eqnref{eq:coll_inv_el2} and Eqs. \eqnref{eq:coll_inv_in1} - \eqnref{eq:coll_inv_in2}, respectively) evaluated at the discrete velocity nodes given by Eq. \eqnref{eq:vk}. Hence, for the columns associated to the discrete velocity node $\vk$ one has:
\begin{IEEEeqnarray}{rCl}
\left(\Cel\right)_{\, \kvec} & = & \Dvc \, \wk \left[\begin{array}{ccc} \mi \, \deltaib & \mi \, \vk & \half \mi \, \vks \end{array}\right]^{\mathrm{T}}, \label{eq:Cel} \\
\left(\Cin\right)_{\, \kvec} & = & \Dvc \, \wk \left[\begin{array}{ccc} \mi & \mi \, \vk & \half \mi \, \vks + \Einti \end{array}\right]^{\mathrm{T}}, \quad \iset, \, \kvec \, \in \, \Vsetc. \label{eq:Cin}
\end{IEEEeqnarray}
In Eqs. \eqnref{eq:Cel} - \eqnref{eq:Cin}, the quantity $\wk = \wkx \, \wky \, \wkz$ is the global integration weight associated to the velocity node $\vk$, $\smash{\vks = v^{\, 2}_{\, \kx} + v^{\, 2}_{\, \ky} + v^{\, 2}_{\, \kz}}$ and $\deltaib$ is a vector made of $\Ns$ components whose $j^{\, \rm th}$ component is $\delta_{ij}$. 
\newtheorem{Opt_el}[FT]{Proposition}
\begin{Opt_el}
The solution of the constrained optimization problem $\Pel$ for elastic collisions (Eq. \eqnref{eq:pel}) is:
\be 
\QVel = \QtVel - \CelT \, {\Csumel}^{-1} \, \Qsumel,\quad \ijset. \label{eq:sol_el}
\ee
In Eq. \eqnref{eq:sol_el}, the symbol $\Trm$ is used to indicate the transpose operator while the matrix $\Csumel$ and the vector $\Qsumel$ are, respectively, defined as:
\begin{IEEEeqnarray}{rCl}
\Csumel & = & \Ns \!\! \sumins \! \Cel \, \CelT,\\
\Qsumel & = & \!\!\! \sumijns \!\!\! \Cel \, \QtVel. 
\end{IEEEeqnarray}
\end{Opt_el}
\begin{proof}
The Lagrangian associated to the constrained optimization problem $\Pel$ in Eq. \eqnref{eq:pel} is:
\be \label{eq:lagr}
\Lagrel = \!\!\! \sumijns \!\!\! \left|\QtVel - \QVel \right|^{\, 2} + \,\, \lamelT \!\! \sumijns \!\!\! \Cel \, \QVel.
\ee
The vector $\lamel$ in Eq. \eqnref{eq:lagr} is the Lagrange multiplier vector and has $\Ns + 4$ components. The solution of the problem $\Pel$ is given by the stationary points of the Lagrangian $\Lagrel$ (Eq. \eqnref{eq:lagr}). These are found by imposing:
\begin{IEEEeqnarray}{rCl}
\fr{\pa \, \Lagrel}{\pa \, \QVel} & = & \zeroel, \quad \ijset, \label{eq:lag1} \\
\fr{\pa \, \Lagrel}{\pa \, \lamel} & = & \zeroel. \label{eq:lag3}
\end{IEEEeqnarray}  
The application of Eqs. \eqnref{eq:lag1} - \eqnref{eq:lag3} leads to:
\begin{IEEEeqnarray}{rCl}
\QVel & = & \QtVel - \half \CelT \, \lamel, \quad \ijset, \label{eq:lag4} \\
\zeroel & = & \!\!\! \sumijns \!\!\! \Cel \, \QVel. \label{eq:lag6} 
\end{IEEEeqnarray}
The left multiplication of Eq. \eqnref{eq:lag4} by the matrix $\Cel$ and the sum of the result obtained over all elastic collisions gives (after some algebra):
\be \label{eq:lag12}
\lamel = 2 \left[ \Ns \!\! \sumins \! \Cel \, \CelT\right]^{-1} \left( \sumijns \!\!\! \Cel \, \QtVel \right).
\ee
The substitution of Eq. \eqnref{eq:lag12} in Eq. \eqnref{eq:lag4} gives the thesis.
\end{proof}
\noindent 
Equation \eqnref{eq:sol_el} reduces to the original result obtained by Gamba \textit{et al} \cite{Gamba1,Gamba2} for the case of a pure gas without internal energy:
\be \label{eq:sol_el_orig}
\mbf{Q} = \mbf{\tilde{Q}} - {\mbf{C}^{\, \elrm}}^{\mrm{T}} \left(\mbf{C}^{\, \elrm} \, {\mbf{C}^{\, \elrm}}^{\mrm{T}}\right)^{-1} \!\! \mbf{C}^{\, \elrm} \, \mbf{\tilde{Q}},
\ee
where the elastic integration matrix $\mbf{C}^{\, \elrm}$ in Eq. \eqnref{eq:sol_el_orig} is obtained from Eq. \eqnref{eq:Cel} when $\Ns = 1$:
\be 
\mbf{C}^{\, \elrm} = \Dvc \, \wk \left[\begin{array}{ccc} m & m \, \vk & \half m \, \vks  \end{array}\right]^{\mathrm{T}}, \quad \kvec \, \in \, \Vsetc.
\ee
\newtheorem{Opt_in}[FT]{Proposition}
\begin{Opt_in}
The solution of the constrained optimization problems $\Pin$ for inelastic collisions (Eq. \eqnref{eq:pin}) is:
\be
\QVin = \QtVin - \CinT \, {\Csumin}^{-1} \, \Qsumin ,\quad \ijhlsetC. \label{eq:sol_in}
\ee
In Eq. \eqnref{eq:sol_in}, the matrix $\Csumin$ and the vector $\Qsumin$ are, respectively, defined as:
\begin{IEEEeqnarray}{rCl}
\Csumin & = & \Nin \!\!  \sumins \! \Cin \, \CinT ,\\
\Qsumin & = & \!\!\!\!\!\! \sumijhlns \!\!\!\!\!\! \Cin \, \QtVin. 
\end{IEEEeqnarray}
\end{Opt_in}
\begin{proof}
The Lagrangian associated to the constrained optimization problem $\Pin$ in Eq. \eqnref{eq:pin} is:
\be \label{eq:lagrin}
\Lagrin = \!\!\!\!\! \sumijhlns \!\!\!\!\!\!\! \left|\QtVin -  \QVin \right|^{2} + \,\, \laminT \!\!\!\!\! \sumijhlns \!\!\!\!\!\! \Cin \, \QVin.
\ee
The vector $\lamin$ in Eq. \eqnref{eq:lagrin} is the Lagrange multiplier vector and has $5$ components. The solution of the problem $\Pin$ is given by the stationary points of the Lagrangian $\Lagrin$ (Eq. \eqnref{eq:lagrin}). These are found by imposing:
\begin{IEEEeqnarray}{rCl}
\fr{\pa \, \Lagrin}{\pa \, \QVin} & = & \zeroin, \quad \ijhlsetC, \label{eq:lag1in} \\
\fr{\pa \, \Lagrin}{\pa \, \lamin} & = & \zeroin. \label{eq:lag3in}
\end{IEEEeqnarray}  
The application of Eqs. \eqnref{eq:lag1in} - \eqnref{eq:lag3in} leads to:
\begin{IEEEeqnarray}{rCl}
\QVin & = & \QtVin - \half \CinT \, \lamin, \quad \ijhlsetC, \label{eq:lag4in} \\
\zeroin & = & \!\!\!\!\! \sumijhlns \!\!\!\!\!\! \Cin \, \QVin. \label{eq:lag6in} 
\end{IEEEeqnarray}
The left multiplication of Eq. \eqnref{eq:lag4in} by the matrix $\Cin$ and the sum of the result obtained over all the inelastic collisional processes given by the set $\Cinset$ gives (after some algebra):
\be \label{eq:lag12in}
\lamin = 2 \left[\Nin \!\! \sumins \! \Cin \, \CinT \,\right]^{-1} \left( \sumijhlns \!\!\!\!\!\!\! \Cin \, \QtVin \right).
\ee
The substitution of Eq. \eqnref{eq:lag12in} in Eq. \eqnref{eq:lag4in} gives the thesis.
\end{proof}
\newpage
\section{Computational results}\label{sec:res}
The numerical method presented in detail in Sect. \ref{sec:num} has been implemented in a parallel C code (Boltzmann Equation Spectral-Lagrangian Solver - BESS in what follows). Parallelization is performed by means of the OpenMP library \cite{OpenMP_book}. The FFTW \cite{fftw,fftw_web} (Fastest-Fourier-Transform in the West) and the GSL \cite{gsl} (GNU-Scientific Library) packages have been used, respectively, for the evaluation of FFTs (and inverse FFTs) and vector/matrix manipulation. 

Both space homogeneous and space in-homogeneous benchmarks have been considered. The former consist in studying the evolution towards equilibrium of isochoric systems initially set in a non-equilibrium state, while the latter consist in computing the steady-state flow across normal shock waves. In all the cases, macroscopic moments (given in Sects. \ref{sec:mom_cons} and \ref{sec:neq}) have been computed and compared with the DSMC results obtained by Torres \cite{Erik_RGD28}. The numerical approximation of the integrals defining the macroscopic moments are given in \ref{app:mom}.
\subsection{Isochoric equilibrium relaxation of a Ne-Ar mixture}\label{sec:Ne_Ar_Hom}
The system under investigation is a binary mixture of Neon and Argon. The electronic energy of the atoms is assumed to be negligible. Only elastic collisions are accounted for. The hard-sphere collision model \cite{Bird_book} is used for the differential cross-section, $\sigel = (\diami + \diamj)^{\, 2}/16$ with $\diami$ and $\diamj$ being, respectively, the diameters of the species $i$ and $j$. The  system is initially set in a non-equilibrium state where both species follow a Maxwell-Boltzmann velocity distribution function (Eq. \eqnref{eq:MB}) with zero hydrodynamic velocity at different temperatures. The numerical values of the species diameter and mass (taken from \cite{Bird_book}) are reported in Table \ref{tab:Ne_Ar_dm} together with the values of the species density and initial temperature. The mixture temperature corresponding to the conditions provided in Table \ref{tab:Ne_Ar_dm} is $333.63 \, \rm K$.

The velocity space is discretized by adopting the values of $\Nv = 24$ nodes and $\Lv = 3000 \, \rm m/s$ (see Sect. \ref{sec:phase}). The collision time-step $\Dtc$ is set to $1 \times 10^{\, -9} \, \rm s$ in order to have a value lower than the mean collision time. The number of partial elastic collision operators to be evaluated at each time-step is equal to 4. The simulation is stopped after 300 time-steps. The CPU time required is approximately 3 minutes when using 4 threads.
\begin{table}[!htbf]
\begin{center}
\begin{tabular}{lllll}
$i$ & $\mi \, [\mrm{kg}]$ & $\diami \, [\mrm{m}]$ & $\rhoi \, [\mrm{kg/m^{3}}]$ & $\Ti \, [\mrm{K}]$ \\
\hline
Ne & $3.35 \times 10^{\, -26}$ & $2.77 \times 10^{\, -10}$ &  $5 \times 10^{\, -3}$  &  300 \\
Ar    & $6.63 \times 10^{\, -26}$ & $4.17 \times 10^{\, -10}$ &  $2 \times 10^{\, -3}$  &  500  \\
\hline
\end{tabular}
\caption{Species mass, diameter, density and initial temperature.}
\label{tab:Ne_Ar_dm}
\end{center}
\end{table}
\begin{figure}[!htbf]
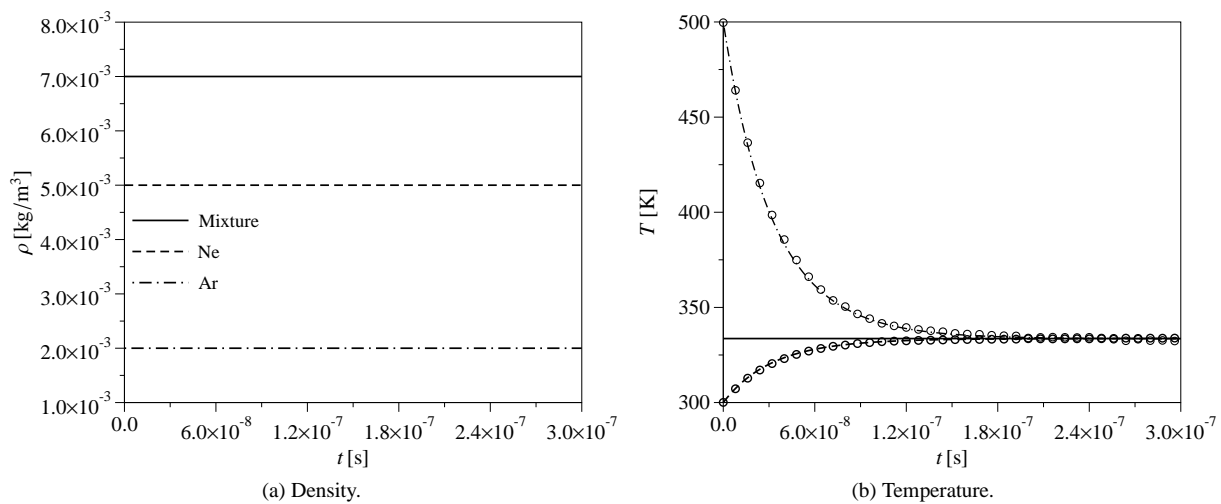

\centering
\subfloat[Density.]{
\psfrag{time}[c][c][0.8]{$\tleg$}
\psfrag{dens}[c][c][0.8]{$\rholeg$}
\psfrag{a}[l][l][0.7]{$\rm Mixture$}
\psfrag{b}[l][l][0.7]{$\Nerm$}
\psfrag{c}[l][l][0.7]{$\Arrm$}
{\includegraphics[scale=0.285,keepaspectratio]{box_Ne_Ar_rho.eps}}} \,  
\subfloat[Temperature.]{
\psfrag{time}[c][c][0.8]{$\tleg$}
\psfrag{temp}[c][c][0.8]{$\Tleg$}
{\includegraphics[scale=0.285,keepaspectratio]{box_Ne_Ar_temp.eps}}} \,
\caption{Isochoric equilibrium relaxation of a Ne-Ar mixture: time-evolution of the species and mixture density and temperature (lines BESS - symbols DSMC).}\label{fig:box_Ne_Ar_mom} 
\end{figure}  

\newpage
Once the simulation is started, collisions bring the system from its initial non-equilibrium condition to the final equilibrium state. Since the system is isochoric and no external mass, momentum and energy sources are present, the following statements hold:
\bi 
\item The density of each species is constant and maintains its initial value. The same can be said for the mixture density.  
\item The species and mixture hydrodynamic velocity is constant and maintains its initial value (zero).
\item The mixture temperature is constant and maintains its initial value. On the other hand, the temperature of each species experiences variation and approaches the mixture temperature value at equilibrium.
\ei  
The foregoing are a direct consequence of mass, momentum and energy conservation during collisions and should be obtained as a result if the numerical method used for solving the Boltzmann equation is conservative. In order to assess that, the time-evolution of the species and mixture density and temperature is monitored (see Fig. \ref{fig:box_Ne_Ar_mom}). Both the mass and mixture densities remain constant and do not show any variation. The same is valid for the mixture temperature, while the species temperatures evolves towards the correct equilibrium value. The species and mixture hydrodynamic velocities retain their initial values (zero) and are not shown in Fig. \ref{fig:box_Ne_Ar_mom}. From the analysis of the results shown in Fig. \ref{fig:box_Ne_Ar_mom}, one can conclude that the proposed extension of the original spectral-Lagrangian method \cite{Gamba1,Gamba2} to a mixture of monatomic gases without internal energy enables to respect the requirements stated in Eq. \eqnref{eq:Qel_inv}. The agreement with the DSMC results is excellent.   

Figure \ref{fig:box_Ne_Ar_vdf} shows the time-evolution of the $\vx$ axis component of the species velocity distribution functions (the $\vy$ and $\vz$ axis components are not shown because they are practically identical to the $\vx$ component). The results obtained show that the evolution towards the equilibrium state occurs through sequences of Maxwell-Boltzmann velocity distribution functions.
\begin{figure}[!htbf]
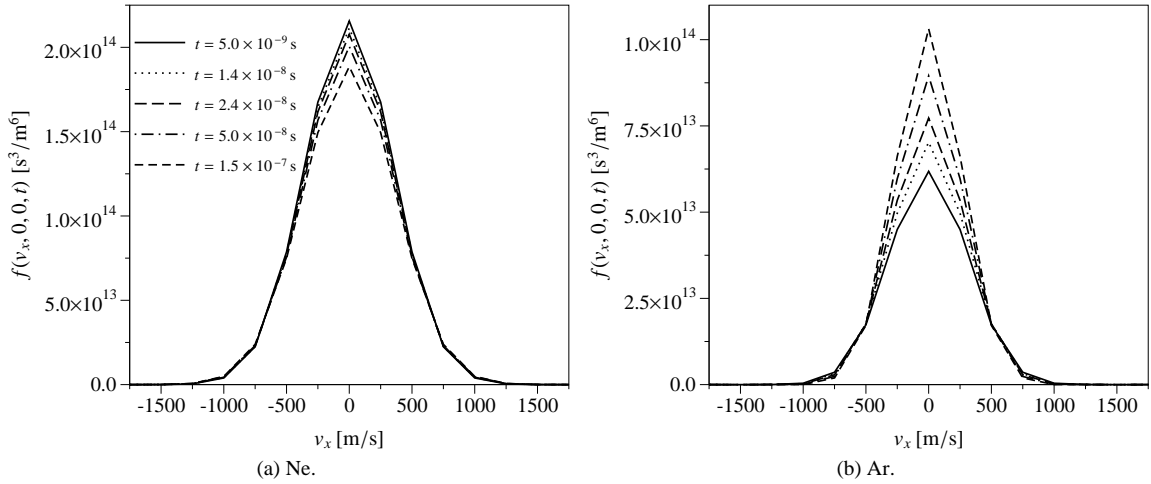

\centering
\subfloat[$\Nerm$.]{
\psfrag{v}[c][c][0.8]{$\vxleg$}
\psfrag{vdf}[c][c][0.8]{$\vdfvxlegHom$}
\psfrag{a}[l][l][0.6]{$t = 5.0 \times 10^{\, -9} \, \srm$}
\psfrag{b}[l][l][0.6]{$t = 1.4 \times 10^{\, -8} \, \srm$}
\psfrag{c}[l][l][0.6]{$t = 2.4 \times 10^{\, -8} \, \srm$}
\psfrag{d}[l][l][0.6]{$t = 5.0 \times 10^{\, -8} \, \srm$}
\psfrag{e}[l][l][0.6]{$t = 1.5 \times 10^{\, -7} \, \srm$}
{\includegraphics[scale=0.28,keepaspectratio]{box_Ne_Ar_vdf_ev_Ne.eps}}} \,  
\subfloat[$\Arrm$.]{
\psfrag{v}[c][c][0.8]{$\vxleg$}
\psfrag{vdf}[c][c][0.8]{$\vdfvxlegHom$}
{\includegraphics[scale=0.28,keepaspectratio]{box_Ne_Ar_vdf_ev_Ar.eps}}} \,
\caption{Isochoric equilibrium relaxation of a Ne-Ar mixture: time-evolution of the $\vx$ axis component of the species velocity distribution function.}\label{fig:box_Ne_Ar_vdf} 
\end{figure} 
\newpage 
\subsection{Isochoric equilibrium relaxation of a multi-energy level gas}\label{sec:Ar_star_Hom}
The system under investigation consists of a monatomic gas with 5 internal energy levels whose values for degeneracy and energy (taken from \cite{Anderson_RGD16}) are given in Table \ref{tab:en_lev}. The mass of the gas particles $m$ is equal to that of Argon (the value is equal to that used in Sect. \ref{sec:Ne_Ar_Hom}) and its diameter $d$ is $3.0 \times 10^{\, -10} \, \rm m$.
\begin{table}[!htbf]
\begin{center}
\begin{tabular}{lll}
$i$ & $\gi$ & $\Einti \, [\mathrm{J}]$ \\
\hline
1 & 1 & 0.0 \\
2 & 1 & $8.30 \times 10^{\, -21}$ \\
3 & 1 & $1.66 \times 10^{\, -20}$ \\
4 & 1 & $2.50 \times 10^{\, -20}$ \\
5 & 1 & $3.30 \times 10^{\, -20}$ \\ 
\hline
\end{tabular}
\caption{Level degeneracy and energy.}
\label{tab:en_lev}
\end{center}
\end{table}  

Both elastic and inelastic collisions are allowed to occur. For the evaluation of the related cross-sections, the model proposed by Anderson \cite{Anderson_RGD16} is considered. According to this model, the differential cross-section associated to the collision $\inter$ is written as a product between a hard-sphere differential cross-section $d^{\, 2}/4$ and a transition probability $\smash{\tprob}$, that is, $\smash{\sigin = \tprob \, d^{\, 2}/4}$. The transition probability $\smash{\tprob}$ only depends on the pre-collisional relative velocity magnitude $u$ and has the following expression:
\be \label{eq:And_prob}
\tprob = \fr{\max \left[\gh \, \gl \, \left(\mu \us - 2 \, \DEijhl\right),0\right] }{\displaystyle \sum_{m,n \, \in \, \Sset} \!\!\! \max\left[ g_m \, \gn \, \left(\mu \us - 2 \, \DEijmn\right) ,0\right]}, \quad \ijhlset,
\ee   
where the reduced mass of the colliding species $\mu$ in Eq. \eqnref{eq:And_prob} is equal to $m/2$ for the present simulation. Notice that Eq. \eqnref{eq:And_prob} comprises also the case of elastic collisions.

The initial state of the system corresponds to a partial equilibrium condition. The velocity distribution functions of all levels (species) is a two temperature (translational $T$ and internal $\Tint$) Maxwell-Boltzmann velocity distribution function (Eq. \eqnref{eq:MB}) with zero bulk velocity. This is obtained by assuming that the level densities appearing in Eq. \eqnref{eq:MB} are given by the Boltzmann distribution law (Eq. \eqnref{eq:Boltz_distr}) at the internal temperature $\Tint$. 

The gas has a density of $1 \, \rm kg/m^3$. The initial values of the translational and internal temperatures are $1000 \, \rm K$ and $100 \, \rm K$, respectively. The initial condition of the system approximates the state of a gas immediately behind a normal shock wave when this is treated as a discontinuity.

The velocity space is discretized by adopting the values of $\Nv = 16$ nodes and $\Lv = 3000 \, \rm m/s$ (see Sect. \ref{sec:phase}). The collision time-step $\Dtc$ is set to $1 \times 10^{\, -12} \, \rm s$ in order to have a value lower than the mean collision time (based on a hard-sphere collision model). The number of partial collision operators to be evaluated at each time-step is equal to 625 (25 elastic and 600 inelastic). The simulation is stopped after 2500 time-steps. The CPU time required is approximately 2 hours when using 12 threads.

As for the case studied in Sect. \ref{sec:Ne_Ar_Hom}, when the simulation is started, collisions bring the system to equilibrium. However, due to the presence of inelastic collisions, some differences arise in the time-evolution:
\bi 
\item The mixture density is constant and maintains its initial value. On the other hand, the density of each level changes in time and evolves from the initial non-equilibrium condition to its final equilibrium value.    
\item The level and mixture hydrodynamic velocity is constant and maintains its initial value (zero).
\item The mixture temperature changes in time and evolves from the initial non-equilibrium condition to its final equilibrium value. 
\ei  
 The value of the temperature at equilibrium can be computed from the energy balance between the initial and the final equilibrium state. For the present simulation, the value of $723.5 \, \rm K$ is obtained. Once the equilibrium temperature determined, it is possible to compute the equilibrium values of the level densities by means of the Boltzmann distribution law given in Eq. \eqnref{eq:Boltz_distr}.

In order to assess the conservation properties of the proposed spectral-Lagrangian method for the case of a multi-energy level gas, the time-evolution of the density of each level and the translational and internal temperatures are monitored (see Fig. \ref{fig:box_Ar_star}). The time-evolution of the level densities and temperatures given in Fig. \ref{fig:box_Ar_star} confirms the previous considerations regarding the behavior of the system. In particular, the population of the ground state (first level) decreases while that of the upper states increase. The translational temperature decreases till the equilibrium value is not reached. The opposite behavior (as expected) is observed for the internal temperature. The former demonstrates the existence of a net macroscopic energy transfer from the translational to the internal degree of freedom of the gas. The level and mixture hydrodynamic velocities retain their initial values (zero) and are not shown in Fig. \ref{fig:box_Ar_star}. The agreement with the DSMC solution (also shown in Fig. \ref{fig:box_Ar_star}) is excellent.         
\begin{figure}[!htbf]
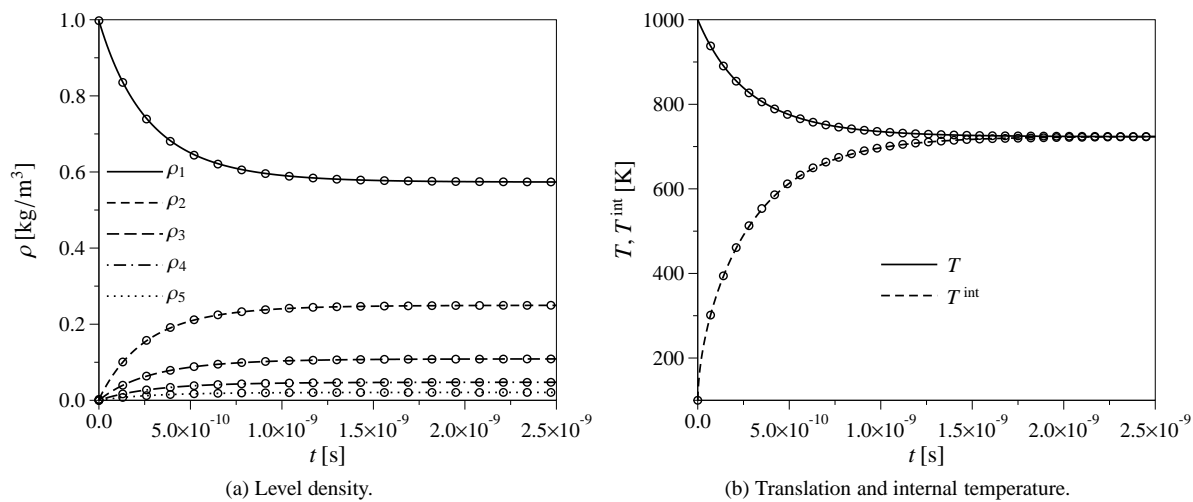

\centering
\subfloat[Level density.]{
\psfrag{time}[c][c][0.9]{$\tleg$}
\psfrag{dens}[c][c][0.9]{$\rho \, [\mathrm{kg/m^3}]$}
\psfrag{a}[c][c][0.8]{$\rho_1$}
\psfrag{b}[c][c][0.8]{$\rho_2$}
\psfrag{c}[c][c][0.8]{$\rho_3$}
\psfrag{d}[c][c][0.8]{$\rho_4$}
\psfrag{e}[c][c][0.8]{$\rho_5$}
{\includegraphics[scale=0.285,keepaspectratio]{box_Ar_star_comp_rhoi.eps}}} \,
\subfloat[Translation and internal temperature.]{
\psfrag{time}[c][c][0.9]{$\tleg$}
\psfrag{temp}[c][c][0.9]{$T, \Tint \, [\mathrm{K}]$}
\psfrag{a}[l][l][0.8]{$T$}
\psfrag{b}[l][l][0.8]{$\Tint$}
{\includegraphics[scale=0.285,keepaspectratio]{box_Ar_star_comp_T_Tint.eps}}} \,
\caption{Isochoric equilibrium relaxation of a multi-energy level gas: time-evolution of the level density, translational temperature and internal temperature (lines BESS - symbols DSMC).}\label{fig:box_Ar_star}
\end{figure}      
\begin{table}[!htbf]
\begin{center}
\begin{tabular}{llllllll}
     & $T \, [\mrm{K}] $ & $\rho_1 \, [\mrm{kg/m^3}] $ & $\rho_2 \, [\mrm{kg/m^3}] $ & $\rho_3 \, [\mrm{kg/m^3}] $ & $\rho_4 \, [\mrm{kg/m^3}] $ & $\rho_5 \, [\mrm{kg/m^3}] $  \\
\hline
BESS & 723.4029 & 0.573 & 0.245 & 0.1088 & 0.0474 & 0.02069 \\
eq  & 723.543 & 0.573 & 0.245 & 0.1089 & 0.0474 & 0.02064 \\
\hline
\end{tabular}
\caption{Final values of temperature and level density (comparison between simulation and equilibrium calculation).}
\label{tab:eq_comp}
\end{center}
\end{table}

\noindent Table \ref{tab:eq_comp} compares the final values of the temperature and the level densities as obtained from the simulation with those determined by means of equilibrium calculations. The agreement between the two data sets very good. This further confirms that the proposed spectral-Lagrangian method allows for respecting the conservation requirements as stated in Eqs. \eqnref{eq:Qel_inv} and \eqnref{eq:Qin_inv} when both elastic and inelastic collisions are accounted for.     
\subsection{Flow across a normal shock wave of a Ne-Ar mixture}\label{sec:shock_Ne_Ar}
The flow across a normal shock wave of a mixture of Neon and Argon is computed by solving the space in-homogeneous Boltzmann equation in the shock wave reference frame (where the shock velocity is zero). The physical model in use (in terms of species diameter and mass, and elastic collision cross-section) is the same as that used for the space homogeneous calculations shown in Sect. \ref{sec:Ne_Ar_Hom}. A peculiar aspect of this flow is the species separation occurring within the shock wave. The latter is due to the mass difference between the two species \cite{Bird_book} with the lighter species experiencing the compression sooner than the heavier one. This fact has been confirmed by both DSMC calculations \cite{Bird_book} and experimental measurements \cite{shock_He_Ar_Exp}.  

The mixture is composed of $50\%$ of Neon and $50\%$ of Argon. The corresponding species mass fractions ($y_i = \rhoi/\rho, \, \iset$) are $0.34$ and $0.66$, respectively. The mixture free-stream ($\infty$) density, temperature and velocity are $1 \times 10^{-4} \, \rm kg/m^3$, $300 \, \rm K$ and $744 \, \rm m/s$, respectively. The latter correspond to a mixture free-stream Mach number equal to 2. Post-shock (ps) values for mixture density, velocity and temperature are computed based on the Rankine-Hugoniot jump relations \cite{Anderson_book} and are $2.29 \times 10^{\, -4} \, \rm kg/m^3$, $623.44 \, \rm K $ and $325.45 \, \rm m/s$, respectively.  

The numerical values of the parameters used for the discretization of the phase-space (Sect. \ref{sec:phase}) and the application of the time-marching method (Sect. \ref{sec:op_split}) are provided in Table \ref{tab:par_shock_Ne_Ar}. The position space is discretized by using a uniform Finite volume grid.     
\begin{table}[!htbf]
\begin{center}
\begin{tabular}{llllllll}
$\Nv$ & $\Lv \, [\mrm{m/s}] $ & $\Nx$ & $\Lmx \, [\mrm{m}]$ & $\Lpx \, [\mrm{m}]$ & $\Dtc \, [\mrm{s}] $ & CFL & Limiter  \\
\hline
22    & 3200 & 201 & $2 \times 10^{\, -2}$ & $2 \times 10^{\, -2}$ & $1 \times 10^{\, -8}$ & 0.5 & van Albada \\
\hline
\end{tabular}
\caption{Simulation parameters.}
\label{tab:par_shock_Ne_Ar}
\end{center}
\end{table}    

In the present simulation, the gas flow is directed along the positive direction of the $x$ axis of the position space. At the boundaries $x = - \Lmx$ and $x = \Lmx$, a Maxwell-Boltzmann velocity distribution function (Eq. \eqnref{eq:MB}) corresponding, respectively, to the pre and post-shock conditions is imposed for each species. The numerical solution is initialized by prescribing the pre-shock Maxwell-Boltzmann velocity distribution function in the interval $- \Lmx \leq x \leq 0$, while the post-shock Maxwell-Boltzmann velocity distribution function is used for the remaining part of the position space. The time-marching method described in Sect. \ref{sec:op_split} is then applied until the steady-state is not reached. 

In order to perform a meaningful comparison with the results obtained by means of the DSMC method, a common origin has to be determined for the numerical solutions. The latter is taken at the location where the normalized density difference $(\rho - \rhoinf)/(\rhops - \rhoinf)$ is equal to 0.5 \cite{Bird_book}.

Figure \ref{fig:shock_Ne_Ar_comp} shows the evolution across the shock wave of the species hydrodynamic velocity and parallel temperature. The results confirm, as expected, that the Neon experiences the compression before the Argon. This effect progressively disappears while the flow approaches the post-shock equilibrium state (where no species separation exists). The parallel temperature of both species does not show a monotone behavior. Instead, it reaches a maximum and then approaches the post-shock equilibrium value. This feature of the flow-field is due the distortion (along the $\vx$ axis of the velocity space) experienced by the species velocity distribution functions while the flow crosses the shock wave (see Fig. \ref{fig:shock_Ne_Ar_vdf_ev}). Notice that the peak is more pronounced for the heavier species (Argon). The comparison with the DSMC results is again very good. A further confirmation to that is provided by Fig. \ref{fig:shock_Ne_Ar_mix} showing the evolution across the shock wave of the mixture density and temperature (together with the related parallel and transverse components).  
\begin{figure}[!htbf]
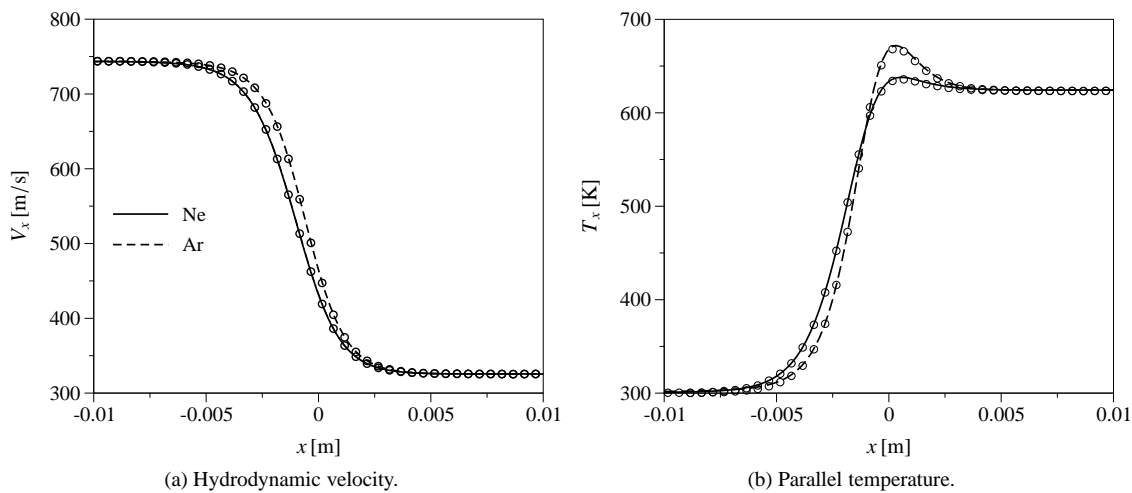

\centering
\subfloat[Hydrodynamic velocity.]{
\psfrag{pos}[c][c][0.8]{$\xleg$}
\psfrag{vel}[c][c][0.8]{$\Vxleg$}
\psfrag{a}[l][c][0.75]{$\Nerm$}
\psfrag{b}[l][c][0.75]{$\Arrm$}
\psfrag{c}[l][c][0.75]{$\Nerm$}
\psfrag{d}[l][c][0.75]{$\Arrm$}
{\includegraphics[scale=0.28,keepaspectratio]{shock_Ne_Ar_comp_vel_Ne_Ar.eps}}} \,  
\subfloat[Parallel temperature.]{
\psfrag{pos}[c][c][0.8]{$\xleg$}
\psfrag{temp}[c][c][0.8]{$\Txleg$}
{\includegraphics[scale=0.28,keepaspectratio]{shock_Ne_Ar_comp_Tx_Ne_Ar.eps}}} \,
\caption{Flow across a normal shock wave of a Ne-Ar mixture: evolution across the shock wave of the species hydrodynamic velocity and parallel temperature component (lines BESS - symbols DSMC).}\label{fig:shock_Ne_Ar_comp} 
\end{figure}   
\begin{figure}[!htbf]
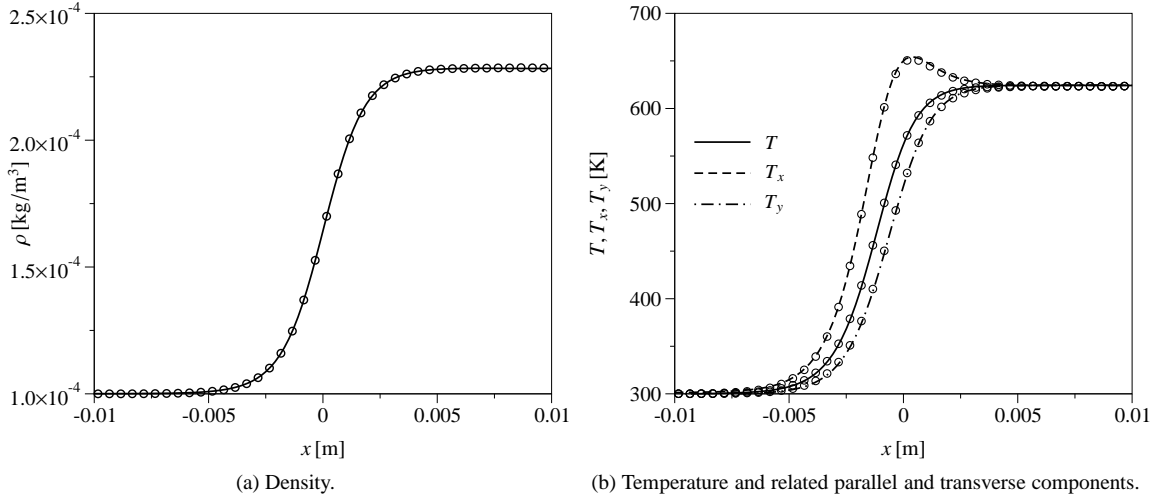

\centering
\subfloat[Density.]{
\psfrag{pos}[c][c][0.8]{$\xleg$}
\psfrag{dens}[c][c][0.8]{$\rholeg$}
{\includegraphics[scale=0.285,keepaspectratio]{shock_Ne_Ar_comp_rho.eps}}} \,  
\subfloat[Temperature and related parallel and transverse components.]{
\psfrag{pos}[c][c][0.8]{$\xleg$}
\psfrag{temp}[c][c][0.8]{$\Tsleg$}
\psfrag{a}[l][c][0.75]{$T$}
\psfrag{b}[l][c][0.75]{$\Tx$}
\psfrag{c}[l][c][0.75]{$\Ty$}
\psfrag{d}[l][c][0.75]{$T$}
\psfrag{e}[l][c][0.75]{$\Tx$}
\psfrag{f}[l][c][0.75]{$\Ty$}
{\includegraphics[scale=0.285,keepaspectratio]{shock_Ne_Ar_comp_temp.eps}}} \,  
\caption{Flow across a normal shock wave of a Ne-Ar mixture: evolution across the shock wave of the mixture density, temperature and related parallel and transverse components (lines BESS - symbols DSMC).}\label{fig:shock_Ne_Ar_mix} 
\end{figure}
\newpage
The evolution across the shock wave of the $\vx$ axis component of the species velocity distribution function is shown in Fig. \ref{fig:shock_Ne_Ar_vdf_ev}. Due to the low value of the free-stream Mach number, small deviations from a Maxwell-Boltzmann shape are observed for the $\vx$ axis component. This justifies, in turn, the moderate maxima reached by the species parallel temperature in Fig. \ref{fig:shock_Ne_Ar_comp}. The evolution across the shock wave of the $\vy$ and $\vz$ axis components of the species distribution function (not shown in Fig. \ref{fig:shock_Ne_Ar_vdf_ev}) occurs through a sequence of Maxwell-Boltzmann distributions.   
\begin{figure}[!htbf]
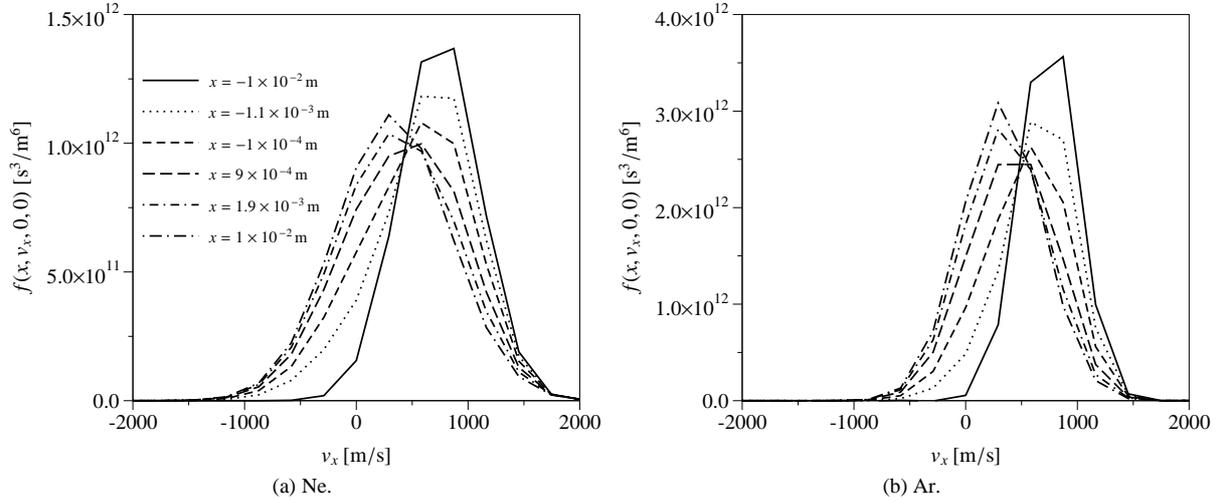

\centering
\subfloat[$\Nerm$.]{
\psfrag{v}[c][c][0.8]{$\vxleg$}
\psfrag{vdf}[c][c][0.8]{$\vdfvxleg$}
\psfrag{a}[l][l][0.6]{$x = - 1 \times 10^{\, -2} \, \rm m$}
\psfrag{b}[l][l][0.6]{$x = - 1.1 \times 10^{\, -3} \, \rm m$}
\psfrag{c}[l][l][0.6]{$x = - 1 \times 10^{\, -4}\, \rm m$}
\psfrag{d}[l][l][0.6]{$x = 9 \times 10^{\, -4}\, \rm m$}
\psfrag{e}[l][l][0.6]{$x = 1.9 \times 10^{\, -3}\, \rm m$}
\psfrag{f}[l][l][0.6]{$x = 1 \times 10^{\, -2}\, \rm m$}
{\includegraphics[scale=0.285,keepaspectratio]{shock_Ne_Ar_vdf_ev_vx_Ne.eps}}} \,  
\subfloat[$\Arrm$.]{
\psfrag{v}[c][c][0.8]{$\vxleg$}
\psfrag{vdf}[c][c][0.8]{$\vdfvxleg$}
{\includegraphics[scale=0.285,keepaspectratio]{shock_Ne_Ar_vdf_ev_vx_Ar.eps}}} \,
\caption{Flow across a normal shock wave of a Ne-Ar mixture: evolution across the shock wave of the $\vx$ axis component of the species distribution function.}\label{fig:shock_Ne_Ar_vdf_ev} 
\end{figure}
\newpage
\subsection{Flow across a normal shock wave of a multi-energy level gas}
The steady-state flow across a normal shock wave of a multi-energy level gas is studied in the shock wave reference frame by considering the same physical model as that used in Sect. \ref{sec:Ar_star_Hom}. For the present calculations, only 2 energy levels are accounted for (the related values of degeneracy and energy are given in Table \ref{tab:en_lev_shock}). The total number of partial collision operators to be evaluated reduces to 16 (4 elastic and 12 inelastic).
\begin{table}[!htbf]
\begin{center}
\begin{tabular}{lll}
$i$ & $\gi$ & $\Einti \, [\mathrm{J}]$ \\
\hline
1 & 1 & 0.0 \\
2 & 1 & $4.14 \times 10^{\, -21}$ \\
\hline
\end{tabular}
\caption{Level degeneracy and energy.}
\label{tab:en_lev_shock}
\end{center}
\end{table}

The free-stream values of the gas density, temperature and velocity are $1 \times 10^{\, -4} \, \rm kg/m^{3}$, $300 \,\rm K$ and $945.33 \, \rm m/s$, respectively. The latter correspond to a free-stream Mach number equal to 3. Due to the presence of internal energy, the flow post-shock conditions are obtained by solving numerically the set of equations expressing the conservation of mass, momentum and energy fluxes between the free-stream and post-shock states (the Rankine-Hugoniot jump relations \cite{Anderson_book} cannot be applied as they are valid only for the case of a calorically perfect gas). For the present calculations, post-shock conditions are computed by using the technique suggested in \cite{Anderson_book}. The values obtained for the post-shock density, temperature and velocity for are $3.25 \times 10^{\, -4} \, \rm kg/m^{3}$, $1046.2 \, \rm K$ and $311.07 \, \rm m/s$, respectively.    
   
The numerical values of the parameters used for the discretization of the phase-space (Sect. \ref{sec:phase}) and the application of the time-marching method (Sect. \ref{sec:op_split}) are provided in Table \ref{tab:par_shock_Ar_star}.      
\begin{table}[!htbf]
\begin{center}
\begin{tabular}{llllllll}
$\Nv$ & $\Lv \, [\mrm{m/s}] $ & $\Nx$ & $\Lmx \, [\mrm{m}]$ & $\Lpx \, [\mrm{m}]$ & $\Dtc \, [\mrm{s}] $ & CFL & Limiter  \\
\hline
30    & 3400 & 201 & $2 \times 10^{\, -2}$ & $2 \times 10^{\, -2}$ & $1 \times 10^{\, -8}$ & 0.5 & van Albada \\
\hline
\end{tabular}
\caption{Simulation parameters.}
\label{tab:par_shock_Ar_star}
\end{center}
\end{table}  

\noindent As already done in Sect. \ref{sec:shock_Ne_Ar}, the position space is discretized by means of a uniform Finite volume grid. At the boundaries $x = -\Lmx$ and $x = \Lpx$, a Maxwell-Boltzmann distribution function (Eq. \eqnref{eq:MB}) corresponding, respectively, to the pre and post-shock conditions is imposed for each level. The steady-state flow across the shock wave is computed by using the same initialization procedure as in Sect. \ref{sec:shock_Ne_Ar}.
\begin{figure}[!htbf]
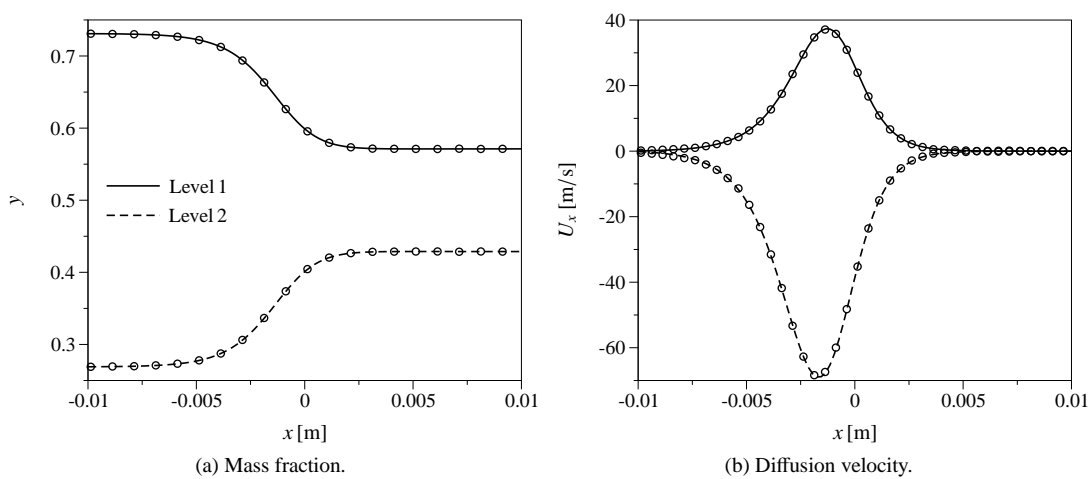

\centering
\subfloat[Mass fraction.]{
\psfrag{pos}[c][c][0.8]{$\xleg$}
\psfrag{mass}[c][c][0.8]{$y$}
\psfrag{a}[l][l][0.75]{$\rm Level\, 1$}
\psfrag{b}[l][l][0.75]{$\rm Level\, 2$}
{\includegraphics[scale=0.27,keepaspectratio]{shock_Ar_star_comp_mass_frac_species.eps}}} \,  
\subfloat[Diffusion velocity.]{
\psfrag{pos}[c][c][0.8]{$\xleg$}
\psfrag{vel}[c][c][0.8]{$\Uxleg$}
{\includegraphics[scale=0.27,keepaspectratio]{shock_Ar_star_comp_Udiff_species.eps}}} \,
\caption{Flow across a normal shock wave of a multi-energy level gas: evolution across the shock wave of the species mass fraction and diffusion velocity (lines BESS - symbols DSMC).}\label{fig:shock_Ar_star_species}
\end{figure}

Figure \ref{fig:shock_Ar_star_species} shows the evolution across the shock wave of the level mass fraction and diffusion velocity. The relative amount of atoms occupying a given energy level changes due to the presence of inelastic collisions. In the case when the energy levels of a chemical component are treated as separate species (like in the present case), one may say that the gas undergoes a  chemical composition variation when it crosses the shock wave. The gradients in chemical composition lead, in turn, to mass diffusion (as confirmed by the species diffusion velocity). Species separation occurs within the shock wave. However, in a comparison with the results of Sect. \ref{sec:shock_Ne_Ar}, some differences arise. In the present case, the separation is the result of chemical composition gradients caused by inelastic collisions. In the case of Sect. \ref{sec:shock_Ne_Ar}, the separation is due to the mass disparity between the species that leads, in turn, to a local chemical composition variation within the shock. The comparison with the DSMC results is again excellent. 
\begin{figure}[!htbf]
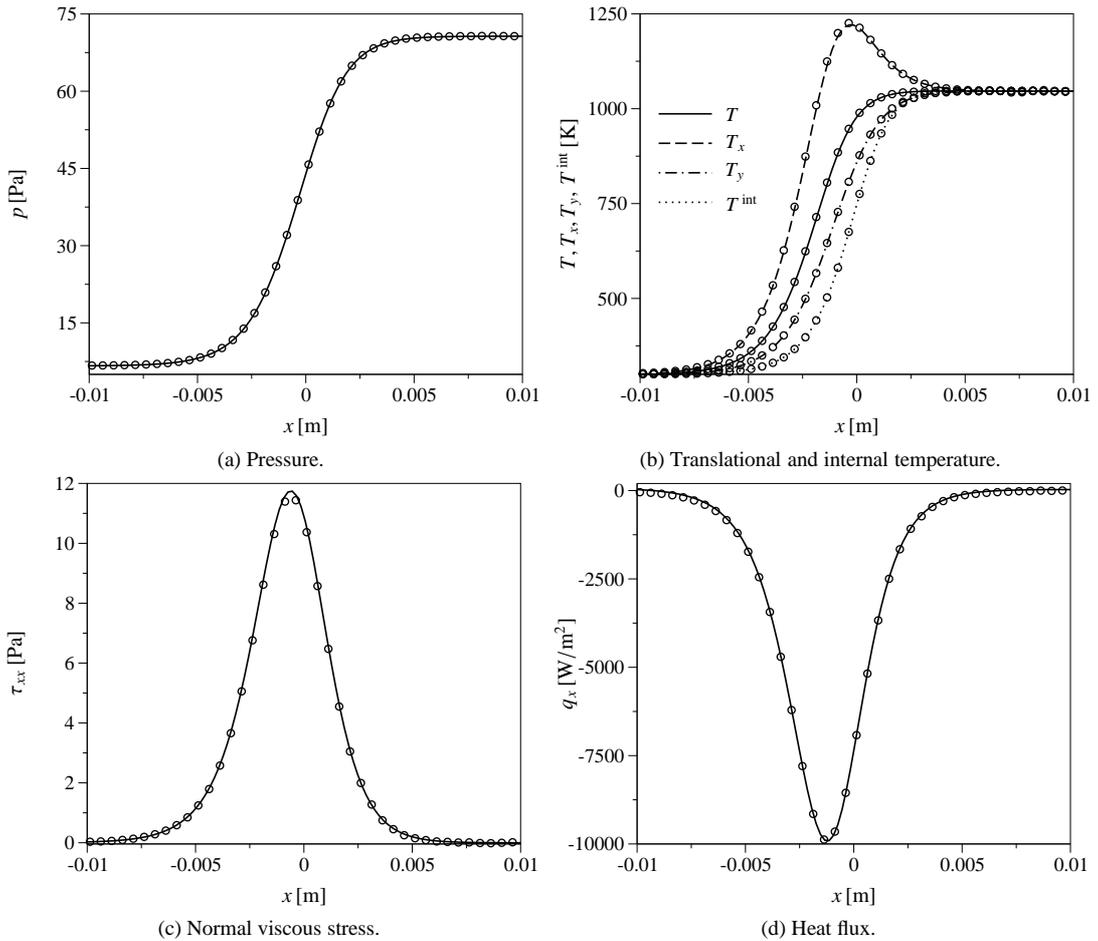

\centering
\subfloat[Pressure.]{
\psfrag{pos}[c][c][0.8]{$\xleg$}
\psfrag{pressure}[c][c][0.8]{$\pleg$}
{\includegraphics[scale=0.27,keepaspectratio]{shock_Ar_star_comp_pres_mix.eps}}} \,  
\subfloat[Translational and internal temperature.]{
\psfrag{pos}[c][c][0.8]{$\xleg$}
\psfrag{temp}[c][c][0.8]{$\Tslegstar$}
\psfrag{a}[l][c][0.75]{$T$}
\psfrag{b}[l][c][0.75]{$\Tx$}
\psfrag{c}[l][c][0.75]{$\Ty$}
\psfrag{d}[l][c][0.75]{$\Tint$}
{\includegraphics[scale=0.27,keepaspectratio]{shock_Ar_star_comp_temp_mix.eps}}} \,
\subfloat[Normal viscous stress.]{
\psfrag{pos}[c][c][0.8]{$\xleg$}
\psfrag{stress}[c][c][0.8]{$\tauxxleg$}
{\includegraphics[scale=0.27,keepaspectratio]{shock_Ar_star_comp_stress_mix.eps}}} \,  
\subfloat[Heat flux.]{
\psfrag{pos}[c][c][0.8]{$\xleg$}
\psfrag{heat}[c][c][0.8]{$\qxleg$}
{\includegraphics[scale=0.27,keepaspectratio]{shock_Ar_star_comp_heat_mix.eps}}} \,
\caption{Flow across a normal shock wave of a multi-energy level gas: evolution across the shock wave of the gas pressure, translational temperature and related parallel and transverse components, internal temperature, normal viscous stress and heat flux (lines BESS - symbols DSMC).}\label{fig:shock_Ar_star_mix} 
\end{figure}

Figure \ref{fig:shock_Ar_star_mix} shows the evolution across the shock wave of the gas pressure, translational temperature (together with the related parallel and transverse components), internal temperature, normal viscous stress and heat flux. The internal temperature lags behind the translational temperature as a result of the finite number of collisions that are needed to excite the upper internal energy levels. The parallel component of the gas translational temperature shows a pronounced maximum. As already mentioned in Sect. \ref{sec:shock_Ne_Ar}, this is due to the distortion experienced by the level velocity distribution function along the $\vx$ axis of the velocity space. The former is confirmed in Fig. \ref{fig:shock_Ar_star_vdf_ev} showing the evolution across the shock wave of the $\vx$ axis component of the level velocity distribution function. The distortions in the $\vx$ axis component are concentrated within a narrow region around the location $x = 0\, \rm m$. The evolution across the shock wave of the $\vy$ and $\vz$ axis components of the level velocity distribution function (not shown in Fig. \ref{fig:shock_Ar_star_vdf_ev}) occurs through a sequence of Maxwell-Boltzmann velocity distribution functions.       
\begin{figure}[!htbf]
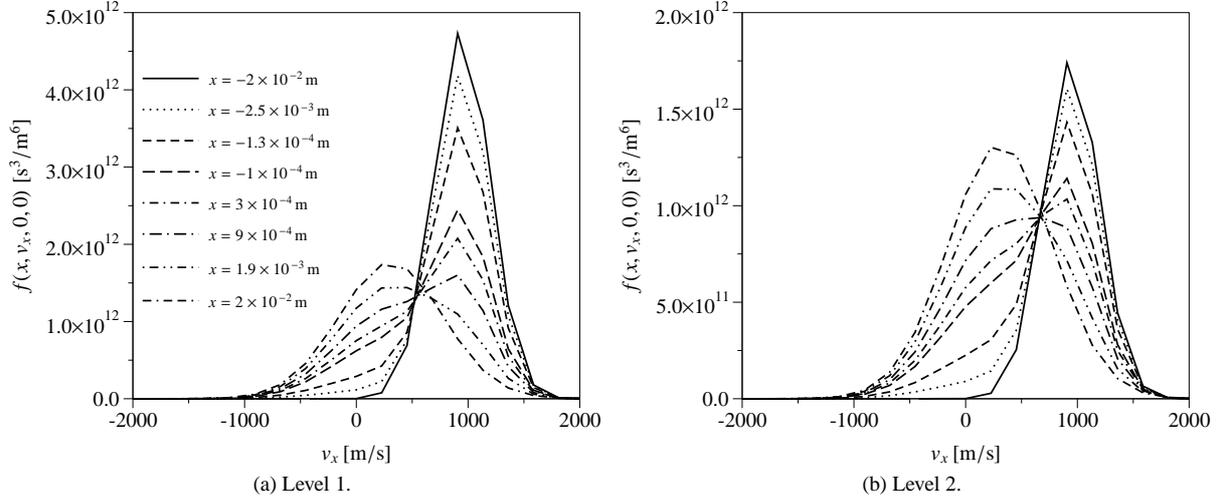

\centering
\subfloat[Level 1.]{
\psfrag{v}[c][c][0.8]{$\vxleg$}
\psfrag{vdf}[c][c][0.8]{$\vdfvxleg$}
\psfrag{a}[l][l][0.6]{$x = - 2 \times 10^{\, -2} \, \rm m$}
\psfrag{b}[l][l][0.6]{$x = - 2.5 \times 10^{\, -3} \, \rm m$}
\psfrag{c}[l][l][0.6]{$x = - 1.3 \times 10^{\, -4}\, \rm m$}
\psfrag{d}[l][l][0.6]{$x = - 1 \times 10^{\, -4}\, \rm m$}
\psfrag{e}[l][l][0.6]{$x = 3 \times 10^{\, -4}\, \rm m$}
\psfrag{f}[l][l][0.6]{$x = 9 \times 10^{\, -4}\, \rm m$}
\psfrag{g}[l][l][0.6]{$x = 1.9 \times 10^{\, -3}\, \rm m$}
\psfrag{h}[l][l][0.6]{$x = 2 \times 10^{\, -2}\, \rm m$}
{\includegraphics[scale=0.285,keepaspectratio]{shock_Ar_star_vdf_ev_Ar_1_vx.eps}}} \,  
\subfloat[Level 2.]{
\psfrag{v}[c][c][0.8]{$\vxleg$}
\psfrag{vdf}[c][c][0.8]{$\vdfvxleg$}
{\includegraphics[scale=0.285,keepaspectratio]{shock_Ar_star_vdf_ev_Ar_2_vx.eps}}} \,
\caption{Flow across a normal shock wave of a multi-energy level gas: evolution across the shock wave of the $\vx$ axis component of the level distribution function.}\label{fig:shock_Ar_star_vdf_ev} 
\end{figure}
\section{Conclusions}\label{sec:concl}
A spectral-Lagrangian method for the Boltzmann equation for a multi-energy level gas has been developed. The formulation of the numerical method accounts for both elastic and inelastic collisions and can also be used for the particular case of a mixture of monatomic gases without internal energy. The conservation of mass, momentum and energy during collisions is enforced through the solution of constrained optimization problems. The effectiveness of the former has been shown by the computational results obtained for both space homogeneous and space in-homogeneous problems. In all the cases, species and mixture macroscopic moments have been compared with the results obtained by means of the DSMC method. Excellent agreement has been observed. 

Future work will focus on alternative phase-space representation (such as momentum space) and on possible benefits, in terms of CPU time reduction, for cases where the velocity distribution admits certain symmetry properties in the velocity space. Computational benchmarks will be also performed by using more accurate cross-section models based on realistic interaction potentials. The results obtained will be then compared with experiments for sake of validation.      
\section*{Acknowledgements}
The authors great-fully acknowledge Mr. Erik Torres at von Karman Institute for the useful discussions on the simulations presented in this paper and for providing the DSMC results used for verification. Research of Alessandro Munaf\`{o} and Thierry E. Magin is sponsored by the European Research Council Starting Grant $\# 259354$, research of Jeffrey R. Haack is sponsored by the NSF Grant $\#\mrm{DMS} - 0636586$ and research of Irene M. Gamba is sponsored by the NSF Grants $\#\mrm{DMS} - 1109625$ and $\#\mrm{DMS} - 1107465$. 

\appendix
\section{Numerical evaluation of the Fourier and inverse Fourier transform}\label{sec:FFT}
Let $f=f(\Vi)$ be a function of the velocity $\Vi$ and let $\ghat = \ghat (\fa)$ be a function of the Fourier variable $\fa$. According to the definitions introduced in Sect. \ref{sec:phys_Fou}, the Fourier transform of the function $f$ and the inverse Fourier transform of the function $\ghat$ are:   
\begin{IEEEeqnarray}{rCl}
\fhat (\fa) & = & \scale \!\! \DomI \!\!\! \exp\left(- \imath \, \fa \cdot \Vi\right) \, f (\Vi) \, d\Vi, \quad \fa \, \in \, \Rd, \label{eq:FT}\\
g  (\Vi)      & = & \scale \!\! \DomIiF \!\!\! \exp\left(\imath \, \fa \cdot \Vi \right) \, \ghat(\fa)  \, d\fa, \quad \Vi \, \in \, \Rd. \label{eq:iFT}
\end{IEEEeqnarray}
The integrals in Eqs. \eqnref{eq:FT} - \eqnref{eq:iFT} must be replaced with discrete sums because of the discretization of the velocity space introduced in Sect. \ref{sec:phase}.

The substitution of the Eq. \eqnref{eq:vk} and Eq. \eqnref{eq:epsk} for $\vk$ and $\ze$, respectively, in Eqs. \eqnref{eq:FT} - \eqnref{eq:iFT} and the replacement of continuous integrals with discrete sums, leads to:
\begin{IEEEeqnarray}{rCl}
\fhat (\ze) & = & \scale \!\! \sumkvec \!\! \wk \exp\left(- \imath \, \ze \cdot \vk \right) \, f (\vk) \, \Dvc, \quad \ze \, \in \, \VcalF, \label{eq:FTd}\\
g (\vk) & = & \scale \!\! \sumepsvec \!\! \weps \exp\left(\imath \, \ze \cdot \vk \right) \, \ghat (\ze) \, \Detac, \quad \vk \, \in \, \Vcal,  \label{eq:iFTd}
\end{IEEEeqnarray}
where the global integration weights $\wk$ and $\weps$ associated to the discrete velocity node $\vk$ and the discrete Fourier velocity node $\ze$, respectively, are $\wk = \wkx \wky \wkz$ and $\weps = \wepsx \wepsy \wepsz$. The expansion of the dot product $ \ze \cdot \vk$ in Eqs. \eqnref{eq:FTd}-\eqnref{eq:iFTd} gives:
\be \label{eq:dot} 
\ze \cdot \vk = (- \Lv + \kx \, \Dv) \, (- \Leta + \epsx \, \Deta) + (- \Lv + \ky \, \Dv) \, (- \Leta + \epsy \, \Deta) + (- \Lv + \kz \, \Dv) \, (- \Leta + \epsz \, \Deta). 
\ee 
After some algebraic manipulation and the use of the relation $\Dv \, \Deta = 2 \, \pi /\Nv$ (Eq. \eqnref{eq:FFT}), Eq. \eqnref{eq:dot} can be re-written as:
\be \label{eq:dot2}
\ze \cdot \vk = 3 \, \Lv \, \Leta - \Lv \, \Deta \, (\epsx + \epsy + \epsz) - \Leta \, \Dv \, (\kx + \ky + \kz) + \fr{2 \, \pi}{\Nv} (\kvec \cdot \epsvec).
\ee
The substitution of Eq. \eqnref{eq:dot2} in Eqs. \eqnref{eq:FTd} - \eqnref{eq:iFTd} gives: 
\begin{IEEEeqnarray}{rCl}
\fhat (\ze) & = & \fr{\exp\left[- \imath \, \delta (\epsvec)  \right]}{\pifac} \sumkvec \!\! \fstar (\vk) \exp\left[- \imath \, \fr{2 \, \pi}{\Nv} (\kvec \cdot \epsvec ) \right], \quad \ze \, \in \, \VcalF, \label{eq:FTd2}\\
g (\vk) & = & \fr{\exp\left[ \imath \, \gamma (\kvec) \right]}{\pifac} \sumepsvec \!\! \ggstar (\ze) \exp\left[\imath \, \fr{2 \, \pi}{\Nv} (\kvec \cdot \epsvec) \right], \quad \vk \, \in \, \Vcal. \label{eq:iFTd2}
\end{IEEEeqnarray}
The quantities $\delta (\epsvec)$ and $\gamma (\kvec)$ in the exponential in front of the sums in Eqs. \eqnref{eq:FTd2} - \eqnref{eq:iFTd2} are:
\begin{IEEEeqnarray}{rCl}
\delta (\epsvec) & = & \Lv \left[3 \, \Leta - \Deta  \, (\epsx + \epsy + \epsy)\right], \quad \epsvec \, \in \, \Vsetc,\\
\gamma (\kvec) & = & \Leta \left[3 \, \Lv - \Dv \, (\kx + \ky + \kz) \right], \quad \, \kvec \in \, \Vsetc,
\end{IEEEeqnarray}
while the functions $\fstar (\vk)$ and $\ggstar (\ze)$ in the same equations are defined as:
\begin{IEEEeqnarray}{rCl}
\fstar (\vk) & = & \wk \,  f (\vk) \exp\left[\imath \, \Leta \, \Dv \, (\kx + \ky + \kz) \right] \Dvc, \quad \vk \, \in \, \Vcal, \label{eq:g*} \\
\ggstar (\ze) & = & \weps \,  \ghat (\ze) \exp\left[- \imath \, \Lv \, \Deta \, (\epsx + \epsy + \epsz)\right] \Detac, \quad \ze \, \in \, \VcalF. \label{eq:h*}
\end{IEEEeqnarray}
The sums in Eqs. \eqnref{eq:FTd2} - \eqnref{eq:iFTd2} correspond, respectively, to the definitions of the Fast-Fourier-Transform ($\FFT$) and inverse Fast-Fourier-Transform ($\iFFT$ of inverse $\FFT$) of the functions $\fstar$ and $\ggstar$ (with no scaling):
\begin{IEEEeqnarray}{rCl}
\FFT (\fstar)(\ze) & = & \!\! \sumkvec \!\! \fstar (\vk) \exp\left[- \imath \, \fr{2 \, \pi}{\Nv} (\kvec \cdot \epsvec) \right], \quad \ze \, \in \, \VcalF,  \label{eq:FFTg}\\
\iFFT (\ggstar)(\vk) & = & \!\! \sumepsvec \!\! \ggstar (\ze) \exp\left[\imath \, \fr{2 \, \pi}{\Nv} (\kvec \cdot \epsvec) \right], \quad \vk \, \in \, \Vcal. \label{eq:iFFTh}
\end{IEEEeqnarray}
In order to exploit Eqs. \eqnref{eq:FFTg} - \eqnref{eq:iFFTh} for computing the Fourier and the inverse Fourier transform, the following algorithm is proposed:
\begin{enumerate}
  \item Given the discrete values of the function $g$ (or $\ghat$), the function $\fstar$ (or $\ggstar$) is evaluated by means of Eq. \eqnref{eq:g*} (or Eq. \eqnref{eq:h*}).
  \item The FFT of $\fstar$ (or the inverse FFT of $\ggstar$) is computed by means of Eq. \eqnref{eq:FFTg} (or Eq. \eqnref{eq:iFFTh}).
  \item The result obtained is substituted in Eq. \eqnref{eq:FTd2}) (or Eq. \eqnref{eq:iFTd2}). 
\end{enumerate}
In the present work, the computation of the FFT and the inverse FFT of functions is performed by means of the FFTW (Fastest-Fourier-Transform in the West) package \cite{fftw}.
\section{Numerical evaluation of the weighted convolution}\label{sec:wconv}
The continuous integrals defining the weighted convolutions in Eqs. \eqnref{eq:FT_theor} - \eqnref{eq:FT_theor2} are approximated as follows. Let $\boldsymbol{\kappa} = (\kappa_x, \kappa_y, \kappa_z)$ and $\wkappaset= (\wkappax, \wkappay, \wkappaz)$ be, respectively, the vector of indices and the vector of integration weights associated to the discrete Fourier velocity node $\xik$. The Fourier transform of the partial elastic and inelastic collision operators ($\Qel$ and $\Qin$, respectively) evaluated at the discrete Fourier velocity node $\ze$ become:
\begin{IEEEeqnarray}{rCl}
\FQel (\ze) & = & \scale \sum_{\boldsymbol{\kappa} \, \in \, \mathcal{I}^{\, *}_{\, \kappavec}} \!\! \wkappa \Fvdfi \left(\ze - \xik \right) \, \Fvdfj (\xik) \, \FFGel \left(\ze,\xik\right) \, \Detac, \quad \ijset, \label{eq:disc_convel} \\
\FQin (\ze) & = & \scale \sum_{\boldsymbol{\kappa}\, \in \, \mathcal{I}^{\, *}_{\, \kappavec}} \!\! \wkappa  \left[ \Fvdfh \left(\ze - \xik \right) \, \Fvdfl (\xik) \, \FFGinG \left(\ze,\xik\right) - \Fvdfi \left(\ze - \xik \right) \, \Fvdfj (\xik) \, \FFGinL \left(\xik\right) \right] \Detac, \nonumber \\
&&\ijhlsetC , \, \ze \, \in \, \VcalF. \label{eq:disc_convin}
\end{IEEEeqnarray} 
In Eqs. \eqnref{eq:disc_convel} -  \eqnref{eq:disc_convin}, the quantity $\wkappa = \wkappax \wkappay \wkappaz$ is the global integration weight associated to the discrete Fourier velocity node $\xik$, while the set $\mathcal{I}^{\, *}_{\, \kappavec}$ is defined as:
\be \label{eq:lim_ind}
\mathcal{I}^{\, *}_{\, \kappavec} = \left\{(\kappax^{\, -},\kappax^{\, +}) \times (\kappay^{\, -},\kappay^{\, +}) \times (\kappaz^{\, -},\kappaz^{\, +})\right\}\, \subset \, \Vsetc. 
\ee
In Eq. \eqnref{eq:lim_ind}, the $-$ and the $+$ upper-scripts are used to indicate, respectively, the lower and the upper limits for the indices $\kappax$, $\kappay$ and $\kappaz$ associated to the discrete Fourier velocity node $\xik$ and are computed based on the following relations:
\begin{IEEEeqnarray}{rCl} 
\kappa_{\, \mrm{s}}^{\, -} & = & \begin{cases}
                 0 & \mathrm{if \,\,} \varepsilon_s < \Nv/2, \\
                 \varepsilon_{\, \alpha} - \Nv/2 + 1 & \mathrm{if \,\,} \varepsilon_{\, \alpha} \geq \Nv/2,
                 \end{cases} \\
\kappa_{\, \mrm{s}}^{\, +}  & = & \begin{cases}
                 \varepsilon_{\, \alpha} + \Nv/2 - 1 & \mathrm{if \,\,} \varepsilon_{\, \alpha} < \Nv/2, \\
                 \Nv & \mathrm{if \,\,} \varepsilon_{\, \alpha} \geq \Nv/2, \quad \alset.
                 \end{cases} 
\end{IEEEeqnarray} 
The introduction of the above lower and upper limits on the $\kappax$, $\kappay$ and $\kappaz$ indices is equivalent to set to zero the functions $\Fvdfh$ and $\Fvdfi$, respectively, in the discrete sums given in Eqs. \eqnref{eq:disc_convel} - \eqnref{eq:disc_convin} when their argument $\left(\ze - \xik \right)$ goes beyond the limits of the Fourier velocity space (Eq. \eqnref{eq:epsk}).
\section{Macroscopic moments}\label{app:mom}  
The macroscopic moments defined in Sects. \ref{sec:mom_cons} and \ref{sec:neq} are approximated as follows:  
\bi 
   \item Species density: 
    \be 
       \rhoi = \mi \!\! \sumkvec \!\! \weight \, \vdfi(\vk) \, \Dvc, \quad \iset.
    \ee 
   \item Hydrodynamic velocity:
    \be 
       \Vvec = \fr{1}{\rho} \sumins \sumkvec \!\! \weight \,  \mi \, \vk \, \vdfi(\vk)  \, \Dvc.
    \ee
   \item Species diffusion velocity:
    \be 
       \Vdiffi = \fr{1}{\nbi} \sumkvec \!\! \weight \, (\vk  - \Vvec) \vdfi(\vk)  \, \Dvc, \quad \iset.
    \ee
   \item Species translational temperature components: 
    \be
      \Tial = \fr{\mi}{\nbi \, \kb} \! \sumkvec \!\! \weight \left[v_{h_{\alpha}} - \Val \right]^{\, 2} \vdfi(\vk) \, \Dvc, \quad \iset, \, \alset.  
    \ee
    \item Viscous stress tensor: 
     \be 
        \taumix = \!\! \sumins \sumkvec \!\! \weight \, \mi \left(\vk - \Vvec \right) \otimes \left(\vk - \Vvec \right) \vdfi(\vk) \, \Dvc - p \, \mbf{I}.
     \ee
    \item Heat flux vector:
    \be
       \qmix = \!\! \sumins \sumkvec \!\! \weight \, \left(\vk - \Vmix \right) \left( \half \mi \left| \vk - \Vvec \right|^{\, 2} + \Einti\right) \vdfi(\vk) \, \Dvc.
     \ee
\ei
\bibliographystyle{plain}
\bibliography{ref}
\end{document}